\documentclass[12pt]{article}
\usepackage{JosPackage}
\allowdisplaybreaks

\title{One-periodic category of twisted complexes, Cohen-Macaulay modules and differential modules}
\author{Joseph Winspeare}
\date{ }

\begin{document}
	\maketitle
	
	\section*{Abstract}
	
	In this paper, we give an $A_\infty$-enhancement of the triangulated orbit category $(\Tcat/[1])_{tr}$ when $\Tcat$ is an algebraic triangulated category. We then use this enhancement to give a description of $(\per(A)/[1])_{tr}$ in terms of differential modules and Cohen-Macaulay modules when $A$ is a finite dimensional $\Z$-graded algebra of finite global dimension.
	
	\tableofcontents
	
	\section*{Introduction}
	
	Given an algebraic triangulated category $\Tcat$ and an autoequivalence $F: \Tcat \to \Tcat$, one can form the orbit category $\Tcat/F$ whose objects are those of $\Tcat$ and morphism spaces are
	\[\Hom_{\Tcat/F}(X,Y):= \bigoplus_{\ell \in \Z} \Hom_\Tcat(X,F^\ell (Y)).\]
	This defines an additive category that is equipped with a canonical additive functor $\Tcat \to \Tcat/F$. In Keller's foundational paper \cite{KelOrb}, the author gives a sufficient condition on $\Tcat$ and $F$ to equip $\Tcat/F$ with a triangulated structure such that the canonical functor $\Tcat \to \Tcat/F$ is a functor of triangulated categories. To do so, he uses a DG-enhancement of $\Tcat$ to construct the triangulated orbit category $(\Tcat/F)_{tr}$, defined as the "smallest triangulated category containing $\Tcat/F$". This construction has many applications, most notably the categorification of cluster algebras via the generalized cluster category (see \cite{KelOrb,Ami_thesis,KelCY,Guo,KelDef}). Although the construction of the triangulated orbit category is technical, Keller gives an explicit alternative description in the case where $\Tcat = \Dcat^b(\mod \, A)$ with $A$ a finite-dimensional algebra of finite global dimension.
	
	In this paper, we construct and study an $A_\infty$-enhancement of the triangulated category $(\Tcat/[1])_{tr}$ where $[1]$ is the suspension functor of the triangulated structure on $\Tcat$. By assumption, $\Tcat$ has an $A_\infty$-enhancement of the form $\Tw(\Acat)$ and the autoequivalence $[1]$ lifts to the shift functor of $\Tw(\Acat)$. We call the $A_\infty$-enhancement of $(\Tcat/[1])_{tr}$ thus constructed the one-periodic $A_\infty$-category of twisted complexes over $\Acat$.
	
	We then focus on the case where $\Acat$ is given by a graded bound quiver $(Q,I)$ of finite global dimension. In this case, $\Tcat$ is the perfect derived category of $A:= kQ/I$ where $A$ is considered as a DG-algebra with zero differential. We obtain a description in terms of differential modules over $A$ and in terms of maximal Cohen-Macaulay modules over an extension $A^\ltimes$ of $A$. When $A$ is trivially graded, we recover results of \cite{Stai} and \cite{Win}. In general, this description requires us to extend the classical definition of differential modules to graded algebras (Definition~\ref{difmod}). 
	
	In the first section, we give a step by step construction of an $A_\infty$-enhancement of $(\per(\Acat)/[1])_{tr}$ where $\Acat$ is a DG-category. There are two ways of constructing the $A_\infty$-enhancement of $\per(\Acat)/[1]$: the DG-orbit category $\Tw(\Acat)/[1]$ as defined by Keller (see \cite{KelOrb}) and the skew $A_\infty$-category $\Tw(\Acat) *_{[1]} \Z$ as defined by Opper and Zvonareva (see \cite{OZ}). We show that these two constructions coincide (Proposition~\ref{sameorbit}). We then consider $\Tw(\Tw(\Acat) *_{[1]} \Z)$ as the $A_\infty$-enhancement of $(\per(A)/[1])_{tr}$ by virtue of \cite[Section~7.4]{Lef}. These enhancements are summarized in the following table.
	
	\begin{center}
		\begin{tabular}{|c|c|}
			\hline 
			Category & $A_\infty$-enhancement \\
			\hline
			$\per(\Acat)$ & $\Tw(\Acat)$ \\
			$\per(\Acat)/[1]$ & $\Tw(\Acat) *_{[1]} \Z$  \\
			$(\per(\Acat)/[1])_{tr}$ & $\Tw(\Tw(\Acat) *_{[1]} \Z)$ \\
			\hline
		\end{tabular}
	\end{center}
	
	In the second section of this paper, we study the $A_\infty$-category $\Tw(\Tw(\Acat) *_{[1]} \Z)$ when $\Acat$ is any $A_\infty$-category. We first show that it is quasi-equivalent to $\Tw(\Z \Acat * \Z)$ (Theorem~\ref{equione}). We then show that it is also quasi-equivalent to $\Tw(k[t,t^{-1}] \otimes_k \Acat)$ (Theorem~\ref{equitwo}). We finish this section by describing a subcategory $\Scat$ of $\Tw(k[t,t^{-1}] \otimes_k \Acat)$ such that the inclusion is a quasi-equivalence (Theorem~\ref{retract}). 
	
	In the third section, we restrict our study to the case where $\Acat$ is given by a bounded $\Z$-graded quiver with relations $(Q,I)$ such that the $k$-algebra $A:=kQ/I$ has finite global dimension. Extending the ungraded case considered in \cite{Win}, we consider the graded algebra $A \otimes_k k[\varepsilon]/\langle \varepsilon^2 \rangle$ where $|\varepsilon| = 1$ as an ungraded algebra denoted $A^\ltimes$. We then construct an equivalence of categories $\Gamma: \mod \, A^\ltimes \overset{\sim}{\to} \Dif \, A$ where $\Dif \, A$ is the category of finitely generated differential $A$-modules. Considering the Frobenius exact subcategory $\CM(A^\ltimes)$ of $\mod \, A^\ltimes$ consisting of maximal Cohen-Macaulay $A^\ltimes$-modules and its image $\Pcat(A)$ in $\Dif(A)$, we obtain an equivalence of triangulated categories
	
	\[(\per(A)/[1])_{tr} \simeq \underline{\Pcat(A)} \simeq \barCM(A^\ltimes)\]
	
	\noindent which is the main result of this paper (Corollary~\ref{main}).

	\subsection*{Conventions}
	
	In this paper, $k$ is a field, all categories are $k$-linear and all algebras are finite-diemsional $k$-algebras.
	
	\section{$A_\infty$ orbit categories}
	
	In this section, we recall definitions and construtions concerning the $A_\infty$-category of twisted complexes. We use conventions and notations of \cite{Sei}.
	
	\subsection{Basic definitions and notations}
	
	All $A_\infty$-categories are assumed to be strictly unital and $A_\infty$-functors to be strict and unital. In this paper, we use definitions and notations as given by Seidel in \cite{Sei}. If $a$ is a homogeneous morphism of an $A_\infty$-category $\Acat$, we write $|a|$ its degree and $\| a \|:= |a| - 1$ its reduced degree. 
	
	\begin{remark}
		\label{Ainftystr}
		If $\mu^d_\Acat=0$ for $d \geq 3$ then $\Acat$ can be equipped with a canonical DG-category structure (in the sense of \cite{KelDG}). Let $f:Y \to Z$ and $g:X \to Y$ be homogeneous morphisms, the differential and composition are defined as follows.
 		\[\partial(f) = (-1)^{|f|} \mu^1_\Acat(f)\]
 		\[f \circ g = (-1)^{|g|} \mu^2_\Acat(f,g)\]
 		With this construction, we can also equip any $k$-linear category or DG-category with an $A_\infty$-structure such that $\mu^d = 0$ for $d \geq 3$.
	\end{remark}
	
	Throughout this paper we will use the constructions $Z^0(\Acat)$ and $H^0(\Acat)$. For definitions, see \cite[Section 1a, Section 3k]{Sei}. Recall that these constructions are functorial, if $F: \Acat \to \Bcat$ is an $A_\infty$-functor, it induces functors $Z^0(F): Z^0(\Acat) \to Z^0(\Bcat)$ and $H^0(F): H^0(\Acat) \to H^0(\Bcat)$.
	
	The constructions of $\Z \Acat$ and $\Tw(\Acat)$ are now recalled in order to fix notations.

	\begin{definition}
		Let $\Acat$ be an $A_\infty$-category. The $A_\infty$-category $\Z \Acat$ is defined as follows.
		\begin{itemize}
			\item Objects of $\Z \Acat$ are formal finite direct sums $\displaystyle\bigoplus_{i \in I} X_i[p_i]$ where for all $i \in I$, $p_i$ is an integer.
			\item If $X$, $Y$ are objects of $\Acat$ and $p$, $q$ are integers then 
			\[\Hom_{\Z \Acat}(X[p], Y[q]):= \Hom_{\textup{gr} \, k}(k[p],k[q]) \otimes_k \Hom_\Acat(X,Y)\]
			where $\textup{gr} \, k$ is the $\Z$-graded category of $\Z$-graded $k$-modules.
			The degree $i$ component $\Hom^i(X[p],Y[q])$ is generated by morphisms of the form $s^p_q \otimes a$ where $s^p_q$ is the identity map from $k[p]$ to $k[q]$ (it has degree $p-q$). The morphism $s^p_q \otimes a$ has degree $p-q + |a|$.
			\item The identity map of $X[p]$ is $s^p_p \otimes \id_X$.
			\item Consider $\mu^d_{\Z \Acat}$ defined by 
			\[\mu_{\Z \Acat}^d(s^{p_{d-1}}_{p_d} \otimes a_d, \dots, s^{p_1}_{p_2} \otimes a_1):= (-1)^{\displaystyle\sum_{i < j} (p_{i-1} -p_i).\|a_j\|} s^{p_1}_{p_d} \otimes \mu_\Acat^d(a_d,\dots, a_1)\]
			on composable homogeneous morphisms. This extends to an $A_\infty$-structure on $\Z \Acat$ by linearity.
		\end{itemize}
	\end{definition}
	
	\begin{definition}
		Let $\Acat$ be an $A_\infty$-category. The $A_\infty$-category $\Tw(\Acat)$ is defined as follows.
		\begin{itemize}
			\item Objects of $\Tw(\Acat)$ are pairs $(X,\delta_X)$ where $X$ is an object of $\Z \Acat$ and $\delta_X: X \to X$ is a degree 1 endomorphism of $X$ in $\Z \Acat$ that is strictly lower triangular (for a detailled definition of strictly lower triangular, see \cite[Section 3l]{Sei}) and such that
			\begin{equation} 
			\sum_{d>0} \mu_{\Z \Acat}^d(\delta_X,\dots, \delta_X) = 0
			\end{equation}
			This equation is called the generalized Maurer-Cartan equation. We call objects of $\Tw(\Acat)$ twisted complexes over $\Acat$.
			\item If $(X,\delta_X)$ and $(Y,\delta_Y)$ are twisted complexes over $\Acat$, set
			\[\Hom_{\Tw(\Acat)}((X,\delta_X),(Y,\delta_Y)):= \Hom_{\Z \Acat}(X,Y)\]
			\item Consider the structure maps $\mu^d_{\Tw(\Acat)}$ given by
			\[\forall d >0, \;\mu_{\Tw(\Acat)}^d(f_d,\dots,f_1):= \sum_{i_0,\dots,i_d \geq 0} \mu_{\Z \Acat}^{d+i_0 + \dots + i_d}(\delta_{X_d}^{\otimes i_d},f_d,\delta_{X_{d-1}}^{\otimes i_{d-1}},a_{d-1},\dots,a_1,\delta_{X_0}^{\otimes i_0})\]
			on composable homogeneous morphisms. This defines an $A_\infty$-structure on $\Tw(\Acat)$ by linearity.
		\end{itemize}
	\end{definition}
	
	\begin{definition}
		A strict $A_\infty$-functor $F: \Acat \to \Bcat$ is said to be cohomologically fully faithful if the induced functor $H^0(F): H^0(\Acat) \to H^0(\Bcat)$ is fully faithful.
	\end{definition}
	
	\begin{remark}
		Let $\Acat$ be an $A_\infty$-category. There are cohomologically fully faithful $A_\infty$-functors $\Acat \hookrightarrow \Z \Acat \hookrightarrow \Tw(\Acat)$ defined as follows
		\begin{center}
			\begin{tikzpicture}
				\node (1) at (0,0) {$\Acat$};
				\node (2) at (5,0) {$ \Z\Acat$};
				\node (3) at (12,0) {$\Tw(\Acat)$};
				\node (4) at (0,-1) {$X$};
				\node (5) at (5,-1) {$X[0]$};
				\node (6) at (12,-1) {$(X[0],0)$};
				\node (7) at (0,-2) {$a:X\to Y$};
				\node (8) at (5,-2) {$s^0_0 \otimes a: X[0] \to Y[0]$};
				\node (9) at (12,-2) {$s^0_0 \otimes a: (X[0],0) \to (Y[0],0)$};
				
				\draw[right hook-stealth] (1) to (2);
				\draw[right hook-stealth] (2) to (3);
				\draw[|-stealth] (4) to (5);
				\draw[|-stealth] (5) to (6);
				\draw[|-stealth] (7) to (8);
				\draw[|-stealth] (8) to (9);
			\end{tikzpicture}
		\end{center}
		Furthermore, the construction of $\Tw(\Acat)$ is functorial, a strict $A_\infty$-functor $F: \Acat \to \Bcat$ induces a strict $A_\infty$-functor $\Tw(F): \Tw(\Acat) \to \Tw(\Bcat)$.
	\end{remark}
	
	\begin{definition}
		We say that an $A_\infty$-category $\Acat$ is triangulated if $H^0(\Acat)$ has a structure of triangulated category with respect to exact triangles is $\Acat$ (in the sense of \cite[Section 3h]{Sei}).
	\end{definition}
	
	\begin{proposition}{\cite[Lemma 3.28]{Sei}}
	Let $\Acat$ be an $A_\infty$-category. Then $\Tw(\Acat)$ is triangulated.
	\end{proposition}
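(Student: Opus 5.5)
The plan is to follow Seidel's criterion: an $A_\infty$-category $\Bcat$ is triangulated as soon as it is closed under shifts and every closed degree-zero morphism admits a mapping cone in $\Bcat$, in the sense of \cite[Section 3h]{Sei}. In that case the triangles induced by cones define a triangulated structure on $H^0(\Bcat)$. So for $\Bcat = \Tw(\Acat)$ it suffices to construct shifts and cones inside $\Tw(\Acat)$ itself, and then to check that they satisfy the defining properties (representing the shifted, respectively cone, Yoneda modules up to quasi-isomorphism).

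\emph{Shifts.} Given a twisted complex $(X,\delta_X)$ with $X = \bigoplus_i X_i[p_i]$, define $X[1] := \bigoplus_i X_i[p_i+1]$ in $\Z\Acat$. Transport $\delta_X$ along the identifications $s^{p_i}_{p_i+1}$, correcting signs by the rule in $\mu_{\Z\Acat}$. One checks directly, using the sign formula in the definition of $\mu^d_{\Z\Acat}$, that the transported differential is again strictly lower triangular and satisfies the generalized Maurer--Cartan equation. The canonical degree-shifting isomorphisms then show that $\Hom_{\Tw(\Acat)}(-,X[1])$ is the shift of the Yoneda module of $X$.

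\emph{Cones.} Let $c:(X,\delta_X)\to(Y,\delta_Y)$ be a degree-zero morphism with $\mu^1_{\Tw(\Acat)}(c)=0$. Put $C := X[1]\oplus Y$ in $\Z\Acat$, with
\[\delta_C = \begin{pmatrix} \delta_{X[1]} & 0 \\ c' & \delta_Y \end{pmatrix},\]
where $c'$ is $c$ precomposed with the shift identification, so that it has degree $1$. Choose the orderings of the two filtrations so that $\delta_C$ is strictly lower triangular, with $X[1]$ placed before $Y$. Expanding $\sum_d \mu^d_{\Z\Acat}(\delta_C,\dots,\delta_C)$ blockwise gives three kinds of terms. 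The diagonal blocks vanish by the Maurer--Cartan equations for $X$ and $Y$. Terms with two or more factors $c'$ do not occur, because $c'$ maps $X[1]$ to $Y$ and there is no block mapping back. The off-diagonal block is exactly $\pm\mu^1_{\Tw(\Acat)}(c) = 0$, by the definition of $\mu^1_{\Tw(\Acat)}$ as a sum over insertions of $\delta$'s. This is the main computational point, and it is where the signs must be tracked carefully. Next, show that for each $Z$ in $\Tw(\Acat)$ the complex $\Hom_{\Tw(\Acat)}(Z,C)$ is, as a graded space, $\Hom(Z,X)[1]\oplus\Hom(Z,Y)$, with differential the mapping-cone differential of $\mu^2(c,-)$. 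Hence it agrees with the abstract cone module of $c$, and $C$ is a cone in Seidel's sense.

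\emph{Conclusion.} Since all shifts and cones exist in $\Tw(\Acat)$, Seidel's general theory applies. Namely, $H^0$ of an $A_\infty$-category closed under shifts and cones is triangulated, with exact triangles those isomorphic to $X\to Y\to C\to X[1]$. The octahedral and rotation axioms are then obtained by passing to the triangulated category of $A_\infty$-modules and using the cohomologically full and faithful Yoneda embedding: $H^0(\Tw(\Acat))$ becomes a full subcategory closed under shifts and cones of a triangulated category, hence triangulated. The main obstacle I expect is the sign bookkeeping in the Maurer--Cartan verification for $\delta_C$ and for the shifted differential. I would handle it by reducing to the one-object case and tracking the exponent $\sum_{i<j}(p_{i-1}-p_i)\|a_j\|$ explicitly.
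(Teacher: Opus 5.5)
Your proposal is correct and is essentially the standard argument behind \cite[Lemma 3.28]{Sei}, which the paper cites without giving a proof of its own: $\Tw(\Acat)$ contains explicit shifts and cones $(X[1]\oplus Y,\delta_C)$, so its Yoneda image is closed under shifts and cones in the triangulated category $H^0$ of modules, and $H^0(\Tw(\Acat))$ inherits the triangulated structure. Two small refinements: first, the identification of $\Hom_{\Tw(\Acat)}(-,C)$ with the abstract cone should be checked on all module structure maps, not only on the differential (it holds on the nose by the same insertion-of-$\delta$ expansion); second, take literally $c'=\mu^2_{\Tw(\Acat)}(c,\iota)$, which equals $\mu^2_{\Z\Acat}(c,\iota)$ by strict unitality, with $\iota=\sum_i s^{p_i+1}_{p_i}\otimes\id_{X_i}\colon X[1]\to X$, which is closed for the shift convention of Definition~\ref{Ainftyaction}, so that the off-diagonal Maurer--Cartan block is exactly $\mu^1_{\Tw(\Acat)}(c')$ and vanishes by the $A_\infty$-relation between $\mu^1$ and $\mu^2$ with no exponent tracking needed (a naive relabelling $s^{p_i}_{q_j}\otimes c_{i,j}\mapsto s^{p_i+1}_{q_j}\otimes c_{i,j}$ would be off by the component-dependent signs $(-1)^{|c_{i,j}|}$ and is not closed in general).
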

	
	\begin{definition}
		\label{deftria}
		Let $\Acat$ be a triangulated $A_\infty$-category. A full subcategory $\Gcat$ of $\Acat$ is a generating subcategory if the inclusion $i: \Gcat \to \Acat$ induces a quasi-equivalence $\Tw(i): \Tw(\Gcat) \to \Tw(\Acat)$. In particular, $\Acat$ is a generating subcategory of $\Tw(\Acat)$.
	\end{definition}
	
	\noindent We now recall a useful result on triangulated $A_\infty$-categories.
	
	\begin{lemma}{\cite[Lemma 3.34]{Sei}}
	\label{Seidel}
	Let $F: \Acat \to \Bcat$ be a cohomologically full and faithfull strict $A_\infty$-functor. Assume that $\Bcat$ is triangulated and that the image of $F$ forms a generating subcategory of $\Bcat$. Then $F$ induces a quasi-equivalence $\tilde{F}: \Tw(\Acat) \to \Bcat$ that restricts to $F$ on $\Acat$.
	\end{lemma}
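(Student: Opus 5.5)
The plan is to build $\tilde{F}$ in two stages: first extend $F$ to twisted complexes by functoriality, then collapse $\Tw(\Bcat)$ back onto $\Bcat$ using that $\Bcat$ is triangulated. Since $F$ is strict, the previous remark gives a strict $A_\infty$-functor $\Tw(F): \Tw(\Acat) \to \Tw(\Bcat)$ restricting to $F$ on $\Acat$. Write $\Ccat \subset \Bcat$ for the full subcategory on the image of $F$. Then $\Tw(F)$ factors as
\[\Tw(\Acat) \to \Tw(\Ccat) \hookrightarrow \Tw(\Bcat).\]
The second arrow is a quasi-equivalence because $\Ccat$ is generating (Definition~\ref{deftria}).

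For the first arrow, I would use that $F: \Acat \to \Ccat$ is cohomologically full and faithful and essentially surjective onto $\Ccat$ at the level of $H^0$. A quasi-equivalence induces a quasi-equivalence on twisted complexes. The reason is that morphism complexes in $\Tw$ carry a finite filtration by the lower-triangular structure, whose associated graded consists of shifted morphism spaces of the underlying category. A spectral sequence, or induction on the length of the twisted complexes, then shows that $\Tw(F)$ is cohomologically full and faithful. Essential surjectivity follows because every twisted complex over $\Ccat$ is, up to isomorphism in $H^0$, an iterated cone of objects in the image. Strictly speaking the filtration argument needs full faithfulness on all of $H^*$, not just $H^0$. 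I will read "cohomologically full and faithful" in that stronger sense, as Seidel does.

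The remaining step is to compare $\Bcat$ with $\Tw(\Bcat)$. Since $\Bcat$ is triangulated, the Yoneda-type embedding $\Bcat \to \Tw(\Bcat)$ is a quasi-equivalence. It is cohomologically fully faithful by the earlier remark. It is essentially surjective because every twisted complex is an iterated cone, and cones already exist in $H^0(\Bcat)$ as exact triangles. Choosing a quasi-inverse $\Tw(\Bcat) \to \Bcat$ that restricts to the identity on $\Bcat$ up to homotopy, and composing it with $\Tw(F)$, yields $\tilde{F}$. Because $\Tw(F)$ sends $\Acat$ into $\Bcat \subset \Tw(\Bcat)$, we can arrange that $\tilde{F}$ restricts to $F$ on $\Acat$.

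The main obstacle is the existence of this quasi-inverse as an honest $A_\infty$-functor, since quasi-equivalences need not admit strict inverses. I would first try to avoid inverting altogether. Instead, I would construct $\tilde{F}$ directly by induction on the length of the twisted complex. Each object $(\bigoplus X_i[p_i], \delta)$ is sent to a chosen iterated cone in $\Bcat$ of the $F(X_i)$, and the higher components of the functor come from the homological perturbation lemma applied to the filtration. If that becomes unwieldy, I would fall back on Seidel's general result that quasi-equivalences between $A_\infty$-categories (over a field, cohomologically unital) have quasi-inverses.
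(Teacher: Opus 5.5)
Your proposal is correct and is essentially the standard argument behind \cite[Lemma~3.34]{Sei}, which the paper only cites: extend to $\Tw(F)$, get a quasi-equivalence onto $\Tw(\Bcat)$ from the filtration argument and generation, and compose with a quasi-inverse of $\Bcat \hookrightarrow \Tw(\Bcat)$. You are also right that this needs full faithfulness on all of $H^*$ rather than only on $H^0$ as in the paper's definition; with only $H^0$ the statement fails, e.g.\ for the functor from $k$ to $\Tw(k[\varepsilon]/\langle \varepsilon^2\rangle)$, $|\varepsilon|=1$, sending the object to the generator.

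The one step you do not justify is ``we can arrange that $\tilde{F}$ restricts to $F$''. A quasi-inverse $G$ with $G|_{\Bcat} \simeq \id_{\Bcat}$ only gives $\tilde{F}|_{\Acat} \simeq F$, so either read the conclusion up to homotopy or take $G$ to be a strict left inverse of $\Bcat \hookrightarrow \Tw(\Bcat)$. Such a $G$ is available in both of the paper's applications: there $\Bcat = \Tw(\Ccat)$, and the strict totalization functor $\Tw(\Tw(\Ccat)) \to \Tw(\Ccat)$ is a left inverse of the embedding. It also keeps $\tilde{F}$ strict, as the paper's conventions demand.
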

	
	An interpretation of this result concerns triangulated hulls. Consider an inclusion of $A_\infty$-categories $\Acat \subset \Bcat$ with $\Bcat$ triangulated, then $\Tw(\Acat)$ is quasi-equivalent to the smallest triangulated subcategory of $\Bcat$ containing $\Acat$.
	
	\subsection{Orbit categories}
	
	In this section, we will detail two constructions of orbit categories: the orbit of an $A_\infty$-category under a strict action of a group $G$ and the orbit category of a DG-category under the action of a DG-automorphism. We then outline a case in which these two constructions are compatible.
	
	\subsubsection*{Skew group $A_\infty$-categories}
	
	This construction was first given in \cite[Definition~5.6]{OZ} (see also \cite[Section 2.4]{AP}).
	
	\noindent Let $G$ be a group and $\Acat$ an $A_\infty$-category. A strict $G$-action on $\Acat$ is a group homomorphism $\rho$ from $G$ to the group of invertible $A_\infty$-endofunctors of $\Acat$. By a small abuse of notation, if $g$ is an element of $G$, we denote the associated functor $g$ instead of $\rho(g)$. 
	
	\begin{definition}
		\label{Ainftyorbit}
		Let $\Acat$ be an $A_\infty$-category and $\rho$ a strict $G$-action on $\Acat$. The skew group $A_\infty$-category $\Acat *_\rho G$ associated to this action is defined as follows.
		\begin{itemize}
			\item Objects of $\Acat *_\rho G$ are the same as those of $\Acat$. If $X$ is an object of $\Acat$, we write $\{X\}$ the corresponding object in $\Acat *_\rho G$.
			\item If $\{X\}$ and $\{Y\}$ are objects, set 
			\[\Hom_{\Acat *_\rho G}(\{X\},\{Y\}):= \bigoplus_{g \in G} \Hom_\Acat(gX,Y)\]
			If $a: gX \to Y$ is a morphism in $\Acat$, we write $a \otimes g$ the corresponding morphism in $\Hom_{\Acat *_\rho G}(\{X\},\{Y\})$.
			\item The $A_\infty$-structure is given on composable homogeneous morphisms by
			\[\mu_{\Acat * G}^n(a_n \otimes g_n, \dots, a_1\otimes g_1):= \mu_\Acat^n(a_n, g_n a_{n-1}, \dots, g_n \dots g_2 a_1) \otimes g_n \dots g_1\]	
		\end{itemize}
	\end{definition}
	
	\begin{remark}
		There is a cohomologically faithful functor $\Acat \hookrightarrow \Acat *_\rho G$ sending a morphism $a: X \to Y$ to $a \otimes 1_G$.
	\end{remark}
	
	\subsubsection*{DG orbit categories}
	
	Let $\Dcat$ be a DG-category and $F: \Dcat \to \Dcat$ an invertible DG-functor. For more details on this construction, see \cite{KelOrb}.
	
	\begin{definition}
		\label{orbcat}
		The DG orbit category $\Dcat/F$ is defined as follows.
		\begin{itemize}
			\item Objects of $\Dcat/F$ are the same as the ones of $\Dcat$;
			\item If $X$ and $Y$ are objects of $\Dcat$ then $\Hom_{\Dcat/F}(X,Y):= \displaystyle\bigoplus_{\ell \in \Z} \Hom_{\Dcat}(X,F^\ell Y)$. The differential is induced by the one in $\Dcat$.
		\end{itemize}
		Note that this is a simplified version of the definition given in \cite{KelOrb}. This is possible because we assume a stronger condition on $F$, namely that it is invertible as a DG-functor.
	\end{definition}
	
	\begin{remark}{\cite[Section 5.1]{KelOrb}}
		\label{DGorb}
		With the setup of Definition~\ref{orbcat}, there is an isomorphism of categories $H^0(\Dcat/F) \simeq H^0(\Dcat)/H^0(F)$ where the second quotient is given by seeing $H^0(\Dcat)$ as a DG-category with zero differential.
	\end{remark}
	
	\begin{proposition}
		\label{sameorbit}
		Let $\Acat$ be a DG-category and $F: \Acat \to \Acat$ an invertible DG-functor. Then $F$ defines a strict $\Z$ action on $\Acat$ (with the $A_\infty$-structure given in Remark \ref{Ainftystr}). There is a isomorphism of $A_\infty$-categories (which is also an isomorphism of DG-categories)
		\[\Fcat: \Acat *_F \Z \overset{\sim}{\to} \Acat / F.\]
	\end{proposition}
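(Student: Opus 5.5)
The plan is to write down $\Fcat$ explicitly and check that it strictly preserves the $A_\infty$-operations, using the dictionary of Remark~\ref{Ainftystr}.

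First I check the action. Since $F$ is an invertible DG-functor, it commutes with the differential $\partial$ and the composition $\circ$, and it preserves degrees. Hence, by the formulas of Remark~\ref{Ainftystr}, $F$ commutes with $\mu^1$ and $\mu^2$, and it trivially preserves $\mu^d=0$ for $d\geq 3$. So $F$ is a strict, invertible, unital $A_\infty$-endofunctor, and $\ell\mapsto F^\ell$ is a group homomorphism $\Z\to \mathrm{Aut}(\Acat)$, i.e. a strict $\Z$-action.

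Next I define $\Fcat$. On objects it is the identity, $\{X\}\mapsto X$. On morphisms, for $a\colon F^\ell X\to Y$ I set
\[\Fcat(a\otimes \ell):= F^{-\ell}(a)\colon X\to F^{-\ell}Y,\]
which is an element of the $(-\ell)$-th summand of $\Hom_{\Acat/F}(X,Y)=\bigoplus_{m}\Hom_\Acat(X,F^mY)$. Since $F^{-\ell}$ is a degree-preserving bijection $\Hom_\Acat(F^\ell X,Y)\to\Hom_\Acat(X,F^{-\ell}Y)$, and $\ell\mapsto -\ell$ is a bijection of $\Z$, the map $\Fcat$ is a degree-preserving linear isomorphism on each Hom space. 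It sends $\id_X\otimes 0$ to $\id_X$, so it is unital. It therefore remains to check compatibility with the structure maps. Both sides have vanishing $\mu^d$ for $d\geq 3$: for $\Acat*_F\Z$ this follows directly from the defining formula, and for $\Acat/F$ it holds by construction.

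For $\mu^1$ I compute $\Fcat(\mu^1(a)\otimes\ell)=F^{-\ell}\mu^1(a)=\mu^1F^{-\ell}(a)$, which is $\mu^1$ of $\Fcat(a\otimes\ell)$ because the differential of $\Acat/F$ is componentwise that of $\Acat$. For $\mu^2$, take $b\colon F^nX\to Y$ and $a\colon F^mY\to Z$. Then
\[\mu^2(a\otimes m,b\otimes n)=\mu^2(a,F^m b)\otimes(m+n),\]
whose image under $\Fcat$ is $F^{-m-n}\mu^2(a,F^mb)=\mu^2(F^{-m-n}a,F^{-n}b)$. On the other side, in Keller's orbit category the composite of $f\colon X\to F^{-n}Y$ with $g\colon Y\to F^{-m}Z$ is $F^{-n}(g)\circ f\colon X\to F^{-n-m}Z$. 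With $f=F^{-n}b$ and $g=F^{-m}a$ this gives $F^{-n-m}(a)\circ F^{-n}(b)$. This agrees with the previous expression, including the sign $(-1)^{|b|}$ from Remark~\ref{Ainftystr}, because $F$ preserves degrees.

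Since $\Fcat$ is bijective on objects and on Hom spaces, it is an isomorphism of $A_\infty$-categories. Because both structures have $\mu^{\geq 3}=0$, translating back through Remark~\ref{Ainftystr} shows it is also an isomorphism of DG-categories. The main obstacle I expect is bookkeeping rather than anything conceptual: fixing the correct orientation of the grading ($\ell$ versus $-\ell$) and matching Keller's composition rule in $\Acat/F$ exactly against the twisted composition $a_n, g_na_{n-1},\dots$ of Definition~\ref{Ainftyorbit}. If the orientation fails, I would instead try $a\otimes\ell\mapsto F^{-\ell}(a)$ placed in the $\ell$-th summand after reindexing via $F^\ell$ on the target, and recheck the $\mu^2$ identity.
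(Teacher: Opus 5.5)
Your proposal is correct and follows essentially the same route as the paper. It uses the same map $a\otimes\sigma^\ell\mapsto F^{-\ell}(a)$, landing in the $(-\ell)$-th summand, and the same direct check of $\mu^1$ and $\mu^2$: Keller's composition $F^{-n}(g)\circ f$ together with the sign $(-1)^{|b|}$ from Remark~\ref{Ainftystr}, with $\mu^{\geq 3}=0$ on both sides. You also spell out two points the paper leaves implicit, namely why $F$ gives a strict $\Z$-action and why $\Fcat$ is bijective on Hom spaces; the fallback in your last sentence is not needed.
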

	
	\begin{proof}
		As $\Acat *_F \Z$ and $\Acat/F$ have the same objects, define $\Fcat$ to be the identity on objects. Let $X$ and $Y$ be objects of $\Acat$, a morphism $f: \{X\} \to \{Y\}$ in $\Acat *_F \Z$ decomposes as $f = \d\sum_{\ell \in \Z} a_\ell \otimes \sigma^\ell$ with $a_\ell: F^\ell X \to Y$. We then set
		\[\Fcat(f):= \sum_{\ell \in \Z} F^{\ell}(a_{-\ell}) \in \bigoplus_{\ell \in \Z} \Hom_\Acat(X,F^\ell Y)\]
		To show that it is a strict $A_\infty$-functor, it is sufficient to show that it is compatible with $\mu^1_{\Acat * \Z}$ and $\mu^2_{\Acat * \Z}$ as $\mu^d_{\Acat * \Z}=0$ for $d > 2$. Let $\partial$ be the differential in $\Acat$ (from its DG-category structure).
		\begin{align*}
			\Fcat(\mu^1_{\Acat * \Z}(a \otimes \sigma^\ell)) &= \Fcat(\mu^1_\Acat(a) \otimes \sigma^\ell) \\
			&= F^{-\ell}(\mu^1_\Acat(a)) \\
			&= F^{-\ell}((-1)^{|a|}\partial(a)) \\
			&= (-1)^{|a|}\partial(F^{-\ell}(a))\\
			&= (-1)^{|a| + |F^{-\ell}(a)|} \mu^1_{\Acat/F}(\Fcat(a \otimes \sigma^\ell)) \\
			&= \mu^1_{\Acat/F}(\Fcat(a \otimes \sigma^\ell))
		\end{align*}
		
		\noindent Similarly, we have
		
		\begin{align*}
			\Fcat(\mu^2_{\Acat * \Z}(a \otimes \sigma^\ell, b \otimes \sigma^k)) &= \Fcat(\mu^2_\Acat(a,F^\ell(b)) \otimes \sigma^{\ell+k}) \\
			&= F^{-\ell-k}(\mu^2_\Acat(a,F^{\ell}(b))) \\
			&= (-1)^{|F^\ell(b)|} F^{-\ell-k}(a \circ F^{\ell}(b)) \\
			&= (-1)^{|b|} F^{-\ell-k}(a) \circ F^{-k}(b)) \\
			&= (-1)^{|b|+|F^{-k}(b)|}\mu^2_{\Acat/F}(F^{-\ell}(a),F^{-k}(b)) \\
			&= \mu^2_{\Acat/F}(\Fcat(a \otimes \sigma^\ell),\Fcat(b \otimes \sigma^k))
		\end{align*}
		
		\noindent Thus $\Fcat$ is a strict $A_\infty$-functor. 
		
		\noindent Furthermore, we claim that it is an isomorphism. Its inverse is the identity on the objects and sends $a: X \to F^\ell(Y)$ in $\Hom_{\Acat/F}(X,Y)$ to $F^{-\ell}(a) \otimes \sigma^{-\ell}$. It is straightforward to check that this also defines a strict $A_\infty$-functor.
	\end{proof}
	
	\subsection{Triangulated hulls of orbit categories}
	
	In this section, we compare the constructions of triangulated hulls for $A_\infty$-categories and pretriangulated hulls of DG-categories. A DG-category $\Dcat$ is said to be pretriangulated if it is triangulated as an $A_\infty$-category in the sense of Definition~\ref{deftria}. 
	
	\begin{definition}{\cite[Section 5.4]{KelOrb}}
		Let $\Acat$ be a DG-category, $\textup{DGmod} \, \Acat$ be the DG-category of DG-$\Acat$-modules and $\Acat \hookrightarrow \textup{DGmod} \, \Acat$ the Yoneda embedding. We define the pretriangulated hull of $\Acat$, denoted $\pretr(\Acat)$, to be the smallest pretriangulated subcategory of $\textup{DGmod}(\Acat)$ containing $\Acat$. 
		If $\Ccat$ is a category with DG enhancement $\Acat$, we denote $\Ccat_{tr}$ its triangulated hull, defined as 
		\[\Ccat_{tr}:= H^0(\pretr(\Acat))\]
		This construction comes with an additive functor $\Ccat \to \Ccat_{tr}$.
	\end{definition}
	
	\begin{proposition}{\cite[Section 7.4]{Lef}}
		\label{Kenji}
		Let $\Acat$ be a DG-category. There is a quasi-equivalence
		\[y'':\Tw(\Acat) \to \pretr(\Acat)\]
	\end{proposition}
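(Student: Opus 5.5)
The strategy is to deduce the statement from Lemma~\ref{Seidel}, applied to the Yoneda embedding $y:\Acat \to \textup{DGmod}\,\Acat$ corestricted to $\pretr(\Acat)$. I regard both categories as $A_\infty$-categories via Remark~\ref{Ainftystr}. Under this convention $y$ is a strict, unital $A_\infty$-functor, since DG-functors correspond to strict $A_\infty$-functors. The target $\pretr(\Acat)$ is pretriangulated by definition, so it is triangulated as an $A_\infty$-category. The lemma then has three hypotheses to check: cohomological full faithfulness of $y$, triangulatedness of $\pretr(\Acat)$, and the condition that the image of $y$ generates $\pretr(\Acat)$. Once these hold, Lemma~\ref{Seidel} produces a quasi-equivalence $y'' := \tilde{y}: \Tw(\Acat) \to \pretr(\Acat)$ extending $y$.

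First I check full faithfulness. The Yoneda lemma for DG-modules gives a natural isomorphism of complexes $\Hom_{\textup{DGmod}\,\Acat}(y X, yY) \cong \Hom_\Acat(X,Y)$. Hence $y$ is quasi-fully faithful, and in particular $H^0(y)$ is fully faithful. The sign twists in Remark~\ref{Ainftystr} only rescale by $\pm 1$ in each degree, so they do not affect this.

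Second, and this is the step I expect to be the main obstacle, I must show that the image of $y$ is a generating subcategory in the sense of Definition~\ref{deftria}. That is, $\Tw(\textup{im}\,y) \to \Tw(\pretr(\Acat))$ must be a quasi-equivalence. I would argue as follows. In the triangulated category $H^0(\pretr(\Acat))$, the smallest triangulated subcategory containing the image of $\Acat$ is everything, because $\pretr(\Acat)$ is by definition the smallest pretriangulated subcategory containing $\Acat$. Every object is therefore obtained from representables by finitely many shifts and cones. Shifts and cones in a pretriangulated DG-category are realized up to homotopy equivalence by one-sided twisted complexes, so every object of $\pretr(\Acat)$ is isomorphic in $H^0$ to the image of a twisted complex over $\textup{im}\,y$. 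Combining this essential surjectivity with the fully faithful comparison for twisted complexes (Seidel's Lemma 3.25-type argument, proved by induction on the length of the filtration) gives the required quasi-equivalence. The delicate point is the induction: a cone in $\pretr(\Acat)$ must be matched with the twisted complex $(X\oplus Y[1], \delta)$ built from a closed degree-zero morphism. This requires comparing Seidel's exact triangles with the cone triangles of DG-modules, with the sign conventions of Remark~\ref{Ainftystr}.

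Finally, I apply Lemma~\ref{Seidel}. This yields the quasi-equivalence $y'':\Tw(\Acat)\to\pretr(\Acat)$, restricting to the Yoneda embedding on $\Acat \subset \Tw(\Acat)$. Explicitly, on objects it sends $(\bigoplus X_i[p_i],\delta)$ to the totalization $\bigoplus yX_i[p_i]$ twisted by $\delta$.
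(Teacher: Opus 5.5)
Your argument is sound in outline, but it takes a different and more roundabout route than the standard one. The paper gives no proof of its own; it only cites Lefèvre-Hasegawa. The usual argument behind that citation, as the name $y''$ suggests, is direct: extend the Yoneda embedding to twisted complexes by totalization, $(\bigoplus_i X_i[p_i],\delta)\mapsto \bigoplus_i yX_i[p_i]$ with differential twisted by $\delta$. One then checks three things:
\begin{itemize}
\item This is a strict DG-functor $\Tw(\Acat)\to\textup{DGmod}\,\Acat$.
\item By the Yoneda lemma it is an isomorphism on morphism complexes, since the twisted differential on $\Hom_{\Z\Acat}(X,Y)$ matches the differential on the Hom complex between totalizations.
\item Its image consists of iterated cones of shifted representables, so it lies in $\pretr(\Acat)$. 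The DG cone of a closed degree-zero map between totalizations is, up to sign normalization, the totalization of the cone twisted complex. Hence the essential image in $H^0$ is closed under shifts and cones and is all of $H^0(\pretr(\Acat))$.
\end{itemize}

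You instead feed $y$ into Lemma~\ref{Seidel}. With the paper's Definition~\ref{deftria}, however, the generating hypothesis (that $\Tw(\textup{im}\,y)\to\Tw(\pretr(\Acat))$ is a quasi-equivalence) is essentially equivalent to the conclusion. So all the content sits in your second step. There, the phrase ``isomorphic in $H^0$ to the image of a twisted complex over $\textup{im}\,y$'' already presupposes a functor from twisted complexes into $\pretr(\Acat)$. Once you write that functor down (it is the totalization) and run the cone induction you describe, you have proved the proposition, and the appeal to Lemma~\ref{Seidel} is redundant.

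What each route buys: the black-box route is more general, since it works for any cohomologically fully faithful functor into a triangulated target. The direct route produces a strict DG-functor that is an isomorphism on Hom complexes, which fits the paper's convention that all functors are strict.

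Your closing sentence also overstates what you have shown. The statement of Lemma~\ref{Seidel} says nothing about what $\tilde y$ does on objects. Its standard proof composes $\Tw(y)$ with a quasi-inverse of $\pretr(\Acat)\to\Tw(\pretr(\Acat))$, which is in general neither strict nor literally the totalization. The explicit formula comes only from the direct construction.
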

	
	Combining this proposition with Proposition~\ref{sameorbit}, we have the following equivalence.
	
	\begin{corollary}
		\label{pretrequiv}
		Let $\Acat$ be a DG-category, then $\Tw(\Acat)$ is also a DG-category. Let $F: \Tw(\Acat) \to \Tw(\Acat)$ be an invertible DG-functor, it defines a strict $\Z$-action on $\Tw(\Acat)$ in the sense of Definition~\ref{Ainftyorbit} Consider the DG-category $\Tw(\Acat) *_F \Z$. There is an equivalence of triangulated categories
		\[H^0(\pretr(\Tw(\Acat) /F)) \simeq H^0(\Tw(\Tw(\Acat) * \Z)).\]
	\end{corollary}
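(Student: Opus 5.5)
The plan is to chain the isomorphism of Proposition~\ref{sameorbit} with the quasi-equivalence of Proposition~\ref{Kenji}, and then pass to $H^0$.

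First, since $\Acat$ is a DG-category, $\mu^d_\Acat=0$ for $d\geq 3$. I will check that $\Tw(\Acat)$ also has vanishing higher operations. In $\mu^d_{\Tw(\Acat)}$ the only surviving terms are those with total length $d+i_0+\dots+i_d\leq 2$, because $\mu^n_{\Z\Acat}$ is a sign twist of $\mu^n_\Acat$. For $d\geq 3$ every term is therefore zero, so by Remark~\ref{Ainftystr} the category $\Tw(\Acat)$ is a DG-category. This is exactly the hypothesis needed to apply Proposition~\ref{sameorbit} to $\Tw(\Acat)$ and $F$. We get a strict $\Z$-action on $\Tw(\Acat)$ and an isomorphism of $A_\infty$-categories, which is also an isomorphism of DG-categories,
\[\Fcat: \Tw(\Acat)*_F\Z \overset{\sim}{\to} \Tw(\Acat)/F.\]
In particular $\Tw(\Acat)*_F\Z$ is itself a DG-category, since its $\mu^d$ vanish for $d>2$.

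Next, I apply Proposition~\ref{Kenji} to the DG-category $\Dcat:=\Tw(\Acat)/F$. This gives a quasi-equivalence $y'':\Tw(\Dcat)\to \pretr(\Dcat)$. By functoriality of $\Tw$, the isomorphism $\Fcat$ induces a strict isomorphism $\Tw(\Fcat):\Tw(\Tw(\Acat)*_F\Z)\to \Tw(\Dcat)$. Composing the two, I obtain a quasi-equivalence
\[\Tw(\Tw(\Acat)*_F\Z)\to \pretr(\Tw(\Acat)/F),\]
and taking $H^0$ yields an equivalence of categories between the two sides of the claimed statement.

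The remaining step, and the one I expect to need the most care, is that this equivalence is triangulated. Both source and target are triangulated $A_\infty$-categories: the source by \cite[Lemma 3.28]{Sei}, and the target because it is pretriangulated by definition. I will then use that an $A_\infty$-functor preserves exact triangles in the sense of \cite[Section 3h]{Sei}. These triangles are defined through cones built from $\mu^1$ and $\mu^2$, and a quasi-equivalence preserves them up to isomorphism. Hence $H^0$ of a quasi-equivalence between triangulated $A_\infty$-categories is an exact equivalence. I will also make sure the DG-to-$A_\infty$ sign conventions of Remark~\ref{Ainftystr} do not affect the notion of exact triangle on the target side. This should hold because the sign changes are given by a strict isomorphism and do not change the underlying $H^0$ category or its cones.
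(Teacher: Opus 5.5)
Your proposal is correct and follows the paper's route. The paper obtains the corollary by combining the isomorphism $\Fcat$ of Proposition~\ref{sameorbit}, applied to the DG-category $\Tw(\Acat)$, with the quasi-equivalence of Proposition~\ref{Kenji}; your extra checks (vanishing of the higher $\mu^d$ on $\Tw(\Acat)$, transport via $\Tw(\Fcat)$, and exactness of $H^0$ of a quasi-equivalence between triangulated $A_\infty$-categories) spell out steps the paper leaves implicit.
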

	
	\section{The one-periodic $A_\infty$-category of twisted complexes}
	
	As group actions on $A_\infty$-categories are written multiplicatively, we consider $\Z$ as the free group with generator $\sigma$.
	
	\begin{definition}
		\label{Ainftyaction}
		Consider the strict $\Z$-action on $\Z \Acat$ induced by the shift functor on graded vector spaces. To be precise, the action of $\sigma$ is given in the following way.
		\begin{itemize}
			\item If $X = \d\bigoplus_{i \in I} X_i[p_i]$ is an object of $\Z \Acat$, define $\sigma X = \d\bigoplus_{i \in I} X_i[p_i +1]$. We denote this object $X[1]$.
			\item If $s^p_q \otimes a: X[p] \to Y[q]$ is a homogeneous morphism in $\Z \Acat$, set 
			\[\sigma (s^p_q \otimes a):= (-1)^{p-q} s^{p+1}_{q+1} \otimes a: X[1] \to Y[1].\] 
			We extend this definition to all morphisms by linearity.
		\end{itemize}
		This strict $\Z$-action extends to a strict $\Z$-action on $\Tw(\Acat)$ in the natural way. This action is the action of the shift on $\Tw(\Acat)$ as defined in \cite[Section 3d]{Sei}.
	\end{definition}
	
	From this $\Z$-action, we can form the $A_\infty$-categories $\Z \Acat *_{[1]} \Z$ and $\Tw(\Acat) *_{[1]} \Z$. From now on, as we only consider the action of the shift, we omit the index $[1]$ and write $\Z\Acat * \Z$ and $\Tw(\Acat) * \Z$. 
	
	In $\Tw(\Acat) * \Z$, all objects are isomorphic to their shifts, making it one-periodic. However, it is not always a triangulated $A_\infty$-category. We thus study its triangulated hull $\Tw(\Tw(\Acat)*\Z)$ instead.
	
	\subsection{$\Tw(\Tw(\Acat) * \Z)$ is quasi-equivalent to $\Tw(\Z \Acat * \Z)$}
	
	\noindent Consider the following strict inclusions of $A_\infty$-categories
	\[\Acat \hookrightarrow \Z \Acat \hookrightarrow \Z \Acat * \Z\]
	
	\noindent Both of these functors are faithful but only the first one is full. Thus they induce a faithful $A_\infty$-functor 
	\[\nu: \Tw(\Acat) \hookrightarrow \Tw(\Z \Acat * \Z)\]
	
	The aim of this section is to show that $\nu$ induces a quasi-equivalence $\tilde{\nu}: \Tw(\Tw(\Acat)* \Z) \to \Tw(\Z \Acat * \Z)$.
	
	\noindent We first give explicit descriptions of $\Tw(\Z \Acat * \Z)$ and of $\nu$.
	
	\vspace{5mm}
	
	\noindent $\bullet$ \underline{The category $\Z(\Z \Acat * \Z)$.}
	
	\noindent Objects of $\Z(\Z \Acat * \Z)$ are of the form $\displaystyle\bigoplus_{i \in I} \{X_i[p_i]\}[q_i]$. The morphism spaces of $\Z(\Z \Acat * \Z)$ are given by
	\[\Hom_{\Z(\Z \Acat * \Z)}(\{X[p]\}[m],\{Y[q]\}[n]) = \Hom_{k}(k[m],k[n]) \otimes \bigoplus_{\ell \in \Z} \Hom_{k}(k[p+\ell],k[q]) \otimes \Hom_\Acat(X,Y)\]
	
	\noindent A morphism in $\Hom_{k}(k[m],k[n]) \otimes \Hom_{k}(k[p+\ell],k[q]) \otimes \Hom_\Acat(X,Y)$ is of the form $s^m_n \otimes s^{p+\ell}_q \otimes a \otimes \sigma^l$ with $a \in \Hom_\Acat(X,Y)$. The degree of this morphism is $m-n+p+\ell-q+|a|$. 
	
	\vspace{5mm}
	
	\noindent $\bullet$ \underline{The category $\Tw(\Z \Acat * \Z)$.}
	
	\noindent Objects of $\Tw(\Z \Acat * \Z)$ are pairs $(X,\delta_X)$ where $X$ is an object of $\Z(\Z \Acat * \Z)$ and $\delta_X$ is a degree 1 endomorphism of $X$ in $\Z(\Z \Acat * \Z)$. Morphisms in $\Tw(\Z \Acat * \Z)$ are the same as the ones in $\Z(\Z \Acat * \Z)$.
	
	\noindent The functor $\nu: \Tw(\Acat) \to \Tw(\Z \Acat * \Z)$ can now be fully described. Let $(X,\delta_X)$ be a twisted complex over $\Acat$ with decompositions $X = \d\bigoplus_{i \in I} X_i[p_i]$ and $\delta_X = \d\sum_{i,j \in I} s^{p_i}_{p_j} \otimes a_{i,j}$. We have
	\[\nu(X,\delta_X) = \left(\bigoplus_{i \in I} \{X_i[0]\}[p_i], \sum_{i,j \in I} s^{p_i}_{p_j} \otimes s^0_0 \otimes a_{i,j} \otimes \sigma^0 \right).\]
	If $a: X \to Y$ is a morphism in $\Acat$ then $\nu(s^m_n \otimes a) = s^m_n \otimes s^0_0 \otimes a \otimes \sigma^0$.
	
	\begin{theorem}
		\label{equione}
		There is a quasi-equivalence $\tilde{\nu}: \Tw(\Tw(\Acat)*\Z) \overset{\sim}{\longrightarrow} \Tw(\Z \Acat * \Z)$
	\end{theorem}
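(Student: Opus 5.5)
The plan is to apply Lemma~\ref{Seidel} to a strict $A_\infty$-functor $\nu_*: \Tw(\Acat) * \Z \to \Tw(\Z \Acat * \Z)$ extending $\nu$. The target is triangulated by \cite[Lemma 3.28]{Sei}, so it is enough to show three things: $\nu_*$ is a well-defined strict $A_\infty$-functor, it is cohomologically fully faithful, and its image generates.

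\textbf{Step 1: defining $\nu_*$.} On objects, $\nu_*$ agrees with $\nu$. For a morphism $f \otimes \sigma^\ell : \{(X,\delta_X)\} \to \{(Y,\delta_Y)\}$, with $f : X[\ell] \to Y$ in $\Z\Acat$, the definition depends on the form of $f$. If $f = s^{p_i+\ell}_{q_j} \otimes a$ is a component from $X_i[p_i+\ell]$ to $Y_j[q_j]$, set
\[
\nu_*(f \otimes \sigma^\ell) = \pm\, s^{p_i}_{q_j} \otimes s^{\ell}_{0} \otimes a \otimes \sigma^{\ell}.
\]
This transfers the shift by $\ell$ from the $\Z\Acat$-coordinate into the $\sigma$-coordinate of $\Z\Acat * \Z$. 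The degrees match: both sides have degree $p_i+\ell-q_j+|a|$. The sign is chosen so that $\nu_*$ is compatible with the action in Definition~\ref{Ainftyaction}, whose sign $(-1)^{p-q}$ must be absorbed.

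Next, check that $\nu_*$ commutes with all $\mu^d$. Both sides are computed by inserting the differentials $\delta$ and applying $\mu^{d+\sum i_k}_\Acat$ to the underlying morphisms of $\Acat$. The $\sigma$-exponents and the objects agree after the identification above. What is left is a sign comparison between the $\Z\Acat$ signs $(-1)^{\sum (p_{i-1}-p_i)\|a_j\|}$ and the iterated $\Z(\Z\Acat*\Z)$ signs. This is where I expect the main difficulty.

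\textbf{Step 2: cohomological full faithfulness.} I expect $\nu_*$ to be an isomorphism on morphism complexes, not just a quasi-isomorphism. Fix twisted complexes $(X,\delta_X)$ and $(Y,\delta_Y)$. The source is
\[
\bigoplus_\ell \Hom_{\Z\Acat}(X[\ell],Y) = \bigoplus_{\ell,i,j} \Hom_k(k[p_i+\ell],k[q_j]) \otimes \Hom_\Acat(X_i,Y_j).
\]
The target is
\[
\bigoplus_{i,j,\ell} \Hom_k(k[p_i],k[q_j]) \otimes \Hom_k(k[\ell],k[0]) \otimes \Hom_\Acat(X_i,Y_j).
\]
These are both free on the same basis. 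Since $\nu_*$ is compatible with $\mu^1$, and the twisted $\mu^1$ on each side only involves the $\delta$'s (which $\nu_*$ matches), it induces an isomorphism of chain complexes. Hence $H^0(\nu_*)$ is fully faithful.

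\textbf{Step 3: generation.} Every object of $\Tw(\Z\Acat*\Z)$ is an iterated twisted complex built from objects $\{X[p]\}[q]$. Each such object is isomorphic in $H^0$ to $\{X[0]\}[q]$, via $s^q_q \otimes s^p_0 \otimes \id \otimes \sigma^p$ and its inverse, and $\{X[0]\}[q] = \nu(X[q],0)$. So the objects $\{X[0]\}$ together with their shifts, all of which lie in the image, generate under cones, and the image is a generating subcategory.

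Lemma~\ref{Seidel} then gives the quasi-equivalence $\tilde\nu$, which restricts to $\nu_*$ and hence to $\nu$ on $\Tw(\Acat)$. The main obstacle is the sign bookkeeping in Step 1. To handle it, I would first fix the sign normalization of $\nu_*$ by requiring compatibility with $\mu^1$ and $\mu^2$ on unshifted generators. I would then verify the general $\mu^d$ identity by reducing to the multiplicative form of the $\Z\Acat$ sign, which factors over consecutive pairs.
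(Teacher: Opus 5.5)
Your proposal follows the paper's proof: extend $\nu$ to $\Tw(\Acat)*\Z$ by $s^{p_i+\ell}_{q_j}\otimes a\otimes\sigma^\ell\mapsto s^{p_i}_{q_j}\otimes s^\ell_0\otimes a\otimes\sigma^\ell$, check compatibility with every $\mu^d$ by sign bookkeeping, note that this is a bijection on morphism complexes, and conclude with Lemma~\ref{Seidel}. In the paper your $\pm$ is simply $+$: the action sign $(-1)^{\ell_1(p+\ell_2-q)}$ is reproduced by the Koszul signs of $\Z(\Z\Acat*\Z)$ and of the $\sigma$-action inside $\Z\Acat*\Z$ (for $\mu^2$ without differential insertions, both sides carry $(\ell_1+\|\alpha\|)(p-q)+\ell_1\ell_2+\ell_2\|\alpha\|$), and your Step~3 makes explicit the generation hypothesis that the paper leaves tacit, although the isomorphism there should read $s^q_q\otimes s^0_0\otimes\id\otimes\sigma^{-p}\colon\{X[p]\}[q]\to\{X[0]\}[q]$, with inverse $s^q_q\otimes s^p_p\otimes\id\otimes\sigma^{p}$.
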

	
	\begin{proof}
		We extend $\nu: \Tw(\Acat) \to \Tw(\Z \Acat * \Z)$ to a cohomologically fully faithful functor $\tilde{\nu}: \Tw(\Acat)* \Z \to \Tw(\Z \Acat * \Z)$ in the following way.
		
		\noindent As $\Tw(\Acat)* \Z$ has the same objects as $\Tw(\Acat)$, define $\tilde{\nu}$ to be the same as $\nu$ on objects. 
		
		\noindent To define $\tilde{\nu}$ on morphisms, recall that
		
		\[\Hom_{\Tw(\Acat) * \Z}((X,\delta_X),(Y,\delta_Y)) = \bigoplus_{\ell \in \Z} \Hom_{\Z \Acat}(X[\ell],Y)\]
		
		If $X = X_i[p_i]$ and $Y = Y_j[q_j]$ with $X_i$ and $Y_j$ in  $\Acat$ then this becomes
		\[\Hom_{\Tw(\Acat) * \Z}((X,\delta_X),(Y,\delta_Y)) = \bigoplus_{\ell \in \Z} \Hom_k (k[p_i+\ell],k[q_j]) \otimes \Hom_\Acat(X_i,Y_j)\]
		Let $s^{p_i+\ell}_{q_j} \otimes a \otimes \sigma^\ell$ be a morphism in $\Hom_{\Tw(\Acat) * \Z}((X,\delta_X),(Y,\delta_Y))$, define
		\[\tilde{\nu}(s^{p_i+\ell}_{q_j} \otimes a \otimes \sigma^\ell) = s^{p_i}_{q_j} \otimes s^\ell_0 \otimes a \otimes \sigma^\ell \in \Hom_{\Z(\Z \Acat * \Z)}(\{X_i(0)\}[p_i], \{Y_j(0)\}[q_j])\]
		Extend this definition to $\Tw(\Acat) * \Z$ by linearity.
		By definition of morphisms in $\Tw(\Z \Acat * \Z)$, $\tilde{\nu}$ is well defined. It remains to show that $\tilde{\nu}$ is an $A_\infty$-functor. 
		
		What follows is the proof of the compatibility of $\tilde{\nu}$ with $\mu_{\Tw(\Acat) * \Z}^2$. This is sufficient in the case where $\Acat$ is an ordinary category.
		
		\noindent Let $\{(X,\delta_X)\}$, $\{(Y,\delta_Y)\}$ and $\{(Z,\delta_Z)\}$ be objects of $\Tw(\Acat) * \Z$. Let $s^{q+\ell_1}_{r} \otimes \alpha \otimes \sigma^{\ell_1}: (Y,\delta_Y) \to (Z,\delta_Z)$ and $s^{p+\ell_2}_{q} \otimes \beta \otimes \sigma^{\ell_2}: (X,\delta_X) \to (Y,\delta_Y)$ be homogeneous morphisms in $\Tw(\Acat) * \Z$ and $\ell:= \ell_1 + \ell_2$. Then
		
		\begin{align*}
			&\tilde{\nu}(\mu^2_{\Tw(\Acat) * \Z}(s^{q+\ell_1}_{r} \otimes \alpha \otimes \sigma^{\ell_1},s^{p+\ell_2}_{q} \otimes \beta \otimes \sigma^{\ell_2})) \\
			&= \tilde{\nu}((-1)^{\ell_1 (p + \ell_2-q)} \mu^2_{\Tw(\Acat)}(s^{q+\ell_1}_{r} \otimes \alpha,s^{p+\ell}_{q+\ell_1} \otimes \beta ) \otimes \sigma^\ell) \\
			& = \tilde{\nu}(\displaystyle \sum_{a,b,c \geq 0} (-1)^{\ell_1 (p+\ell_2-q)} \mu^{2+a+b+c}_{\Z \Acat}(\delta_Z,\dots,\delta_Z, s^{q+\ell_1}_{r} \otimes\alpha, \delta_{Y[\ell_1]}, \dots, \delta_{Y[\ell_1]}, s^{p+\ell}_{q+\ell_1} \otimes \beta,\\
			& \delta_{X[\ell]}, \dots, \delta_{X[\ell]})\otimes \sigma^\ell)
		\end{align*}
		
		The differentials $\delta_X, \delta_Y$ and $\delta_Z$ can be decomposed as sums of morphisms of the form $s^{k'}_k \otimes d$ with $d$ a homogeneous morphism in $\Acat$. Using the fact that $(s^{k'}_k \otimes d) [1] = (-1)^{k'-k} s^{k'+1}_{k+1} \otimes d$ and by linearity, the above computation can be reduced to a sum of terms of the following form (with the added assuption that morphisms considered here are composable).
		
		\begin{align*}
			&\tilde{\nu}((-1)^{\ell_1 (p+\ell_2-q)} \mu^{2+a+b+c}_{\Z \Acat}(s^{r_{c-1}}_{r_c} \otimes d_Z^{c},\dots,s^{r}_{r_1} \otimes d_Z^1, s^{q+l_1}_{r} \otimes \alpha, (-1)^{\ell_1(q_{b-1}-q)}s^{q_{b-1}+\ell_1}_{q+\ell_1} \otimes d_Y^b, \dots, \\
			&(-1)^{\ell_1(q - q_1)}s^{q+\ell_1}_{q_1+\ell_1} \otimes d_Y^1, s^{p+\ell}_{q+\ell_1} \otimes \beta, (-1)^{\ell(p_{a-1} - p)}s^{p_{a-1}+\ell}_{p+\ell} \otimes d_X^a, \dots, (-1)^{\ell(p_0 - p_1)}s^{p_0+\ell}_{p_1+\ell} \otimes d_X^1) \otimes \sigma^\ell) \\
		\end{align*}
		
		This expression can be computed in the following way.
		
		\begin{align*}
			& (-1)^{\ell_1(p-q + \ell_2) + \ell(p_0 - p)} \tilde{\nu}(\mu^{2+a+b+c}_{\Z \Acat}(s^{r_{c-1}}_{r_c} \otimes d_Z^{c},\dots,s^{r}_{r_1} \otimes d_Z^1, s^{q+\ell_1}_{r} \otimes \alpha,s^{q_{b-1}+\ell_1}_{q+\ell_1} \otimes d_Y^b, \dots,\\
			&s^{q+\ell_1}_{q_1 + \ell_1} \otimes d_Y^1, s^{p+\ell}_{q+\ell_1} \otimes \beta,s^{p_{a-1}+\ell}_{p+\ell} \otimes d_X^a, \dots,s^{p_0+\ell}_{p_1+\ell} \otimes d_X^1) \otimes \sigma^\ell) \\
			&= (-1)^{\top} \tilde{\nu} (s^{p_0+\ell}_{r_c} \otimes \mu_\Acat^{2 +a+b+c}(d_Z^c,\dots,d_Z^1,\alpha,d_Y^b,\dots,d_Y^1,\beta, d_X^a, \dots, d_X^1) \otimes \sigma^\ell) \\
			&= (-1)^{\top} s^{p_0}_{r_c} \otimes s^\ell_0 \otimes \mu_\Acat^{2 +a+b+c}(d_Z^c,\dots,d_Z^1,\alpha,d_Y^b,\dots,d_Y^1,\beta, d_X^a, \dots, d_X^1) \otimes \sigma^\ell
		\end{align*}
		
		\noindent where 
		\begin{align*}
			\top &= \ell_1(p-q + \ell_2) + \ell(p_0 - p) + \sum_{i=1}^{c} \| d_Z^i \| (p_0 +\ell - r_{i-1}) + \sum_{i=1}^{b} \| d_Y^i\|(p_0 + \ell -(q_{i-1} + \ell_1))\\
			& + \sum_{i=1}^{a} \| d_X^i \|(p_0 + \ell - (p_{i-1} + \ell)) + \|\alpha \|(p_0 + \ell - (q+\ell_1)) + \|\beta \|(p_0 + \ell - (p+\ell))
		\end{align*} 
		with conventions $r_0 = r$ and $q_0 = q$.
		
		\noindent Conversely, we have
		\begin{align*}
			& \mu_{\Tw(\Z \Acat * \Z)}^2(\tilde{\nu}(s^{q + \ell_1}_{r} \otimes \alpha \otimes \sigma^{\ell_1}),\tilde{\nu}(s^{p + \ell_2}_{q} \otimes \beta \otimes \sigma^{\ell_2})) = \mu_{\Tw(\Z \Acat * \Z)}^2(s^{q}_{r} \otimes s^{\ell_1}_0 \otimes \alpha \otimes \sigma^{\ell_1}, s^{p}_{q} \otimes s^{\ell_2}_0 \otimes \beta \otimes \sigma^{\ell_2}) \\
			&= \sum_{a,b,c \geq 0} \mu_{\Z (\Z \Acat * \Z)}^{2+a+b+c}(\nu(\delta_Z),\dots, \nu(\delta_Z),s^{q}_{r} \otimes s^{\ell_1}_0 \otimes \alpha \otimes \sigma^{\ell_1}, \nu(\delta_Y), \dots, \nu(\delta_Y),s^{p}_{q} \otimes s^{\ell_2}_0 \otimes \beta \otimes \sigma^{\ell_2}, \\
			&\nu(\delta_X), \dots, \nu(\delta_X)) \\
		\end{align*}
		
		\noindent Using the same argument as above, this is a sum of terms of the following form (where the morphisms are composable).
		
		\begin{align*}
			&\mu_{\Z(\Z \Acat * \Z)}^{2+a+b+c}(s^{r_{c-1}}_{r_c} \otimes s^0_0 \otimes d_Z^c \otimes \sigma^0, \dots, s^{r}_{r_1} \otimes s^0_0 \otimes d_Z^1 \otimes \sigma^0,s^{q}_{r} \otimes s^{\ell_1}_0 \otimes \alpha \otimes \sigma^{\ell_1}, s^{q_{b-1}}_{q} \otimes s^0_0 \otimes d_Y^b \otimes \sigma^0, \\
			& \dots, s^{q}_{q_1} \otimes s^0_0 \otimes d_Y^1 \otimes \sigma^0, s^{p}_{q} \otimes s^{\ell_2}_0 \otimes \beta \otimes \sigma^{\ell_2}, s^{p_{a-1}}_{p} \otimes s^0_0 \otimes d_X^a \otimes \sigma^0, \dots, s^{p_0}_{p_1} \otimes s^0_0 \otimes d_X^1 \otimes \sigma^0  ) \\
		\end{align*}
		
		\noindent Applying sign rules from the $A_\infty$ structure on $\Z (\Z \Acat * \Z)$, we have.
		
		\begin{align*}
			&= (-1)^{\perp_1} s^{p_0}_{r_c} \otimes \mu_{\Z \Acat * \Z}^{2+a+b+c}(s^0_0 \otimes d_Z^c \otimes \sigma^0, \dots, s^0_0 \otimes d_Z^1 \otimes \sigma^0,s^{\ell_1}_0 \otimes \alpha \otimes \sigma^{\ell_1},s^0_0 \otimes d_Y^b \otimes \sigma^0, \\
			&\dots, s^0_0 \otimes d_Y^1 \otimes \sigma^0, s^{\ell_2}_0 \otimes \beta \otimes \sigma^{\ell_2}, s^0_0 \otimes d_X^a \otimes \sigma^0, \dots, s^0_0 \otimes d_X^1 \otimes \sigma^0) \\
			&= (-1)^{\perp_1 + \perp_2} s^{p_0}_{r_c} \otimes \mu_{\Z \Acat}^{2+a+b+c}(s^0_0 \otimes d_Z^c, \dots, s^0_0 \otimes d_Z^1,s^{\ell_1}_0 \otimes \alpha,s^{\ell_1}_{\ell_1} \otimes d_Y^b, \dots, s^{\ell_1}_{\ell_1} \otimes d_Y^1, s^{\ell}_{\ell_1} \otimes \beta, \\
			&s^\ell_\ell \otimes d_X^a, \dots, s^\ell_\ell \otimes d_X^1) \otimes \sigma^l \\
			&= (-1)^{\perp_1 + \perp_2 + \perp_3} s^{p_0}_{r_c} \otimes s^\ell_0 \otimes \mu_\Acat^{2+a+b+c}(d_Z^c, \dots, d_Z^1, \alpha, d_Y^b, \dots d_Y^1, \beta, d_X^a, \dots d_X^1) \otimes \sigma^\ell
		\end{align*}
		
		where 
		\begin{align*}\perp_1= &\sum_{i=1}^{c} \| d_Z^i\| (p_0 - r_{i-1}) + \sum_{i=1}^{b} \| d_Y^i\|(p_0 - q_{i-1}) \\
			&+ \sum_{i=1}^{a} \| d_X^i\|(p_0 - p_{i-1}) + (\| \alpha \| + \ell_1) (p_0 - q) + (\| \beta \| + \ell_2)(p_0 - p)) \\
		\perp_2= & \ell_1 \ell_2 \\
		\perp_3= & \ell (\sum_{i=1}^{c} \| d_Z^i \|) + \ell_2(\sum_{i=1}^{b} \| d_Y^i \|) + \ell_2 \|\alpha \|
	\end{align*}
		
		\noindent It is now a straightforward computation to check that $\top = \perp_1 + \perp_2 + \perp_3$. 
		
		\noindent Similarly, we check that $\tilde{\nu}$ commutes with all other multiplications. A detailled computation can be found in Annex A.
		
		\noindent By definition of the morphism spaces of $\Tw(\Acat)* \Z$ and of $\Tw(\Z\Acat * \Z)$, and by construction of $\tilde{\nu}$, it is a homologically fully faithfull functor. Thus it induces a quasi-equivalence $\Tw(\Tw(\Acat)* \Z) \to \Tw(\Z \Acat * \Z)$ by Lemma \ref{Seidel}.
	\end{proof}
	
	With this theorem, we have a new description $H^0(\Tw(\Tw(\Acat)*\Z))$ in terms of twisted complexes over $\Z \Acat * \Z$. 
	
	\subsection{$\Tw(\Tw(\Acat) * \Z)$ is quasi-equivalent to $\Tw(k[t,t^{-1}] \otimes_k \Acat)$}
	
	We now give another description of $\Tw(\Tw(\Acat) * \Z)$. To do so, we first introduce the $A_\infty$-category $k[t,t^{-1}] \otimes_k \Acat$.
	
	\begin{definition}
		Consider $k[t,t^{-1}]$ as a graded algebra with $|t|=1$, the $A_\infty$-category $k[t,t^{-1}] \otimes_k \Acat$ is defined in the following way.
		\begin{itemize}
			\item The objects are those of $\Acat$.
			\item For all objects $X$ and $Y$, $\Hom_{k[t,t^{-1}]\otimes \Acat}(X,Y):= k[t,t^{-1}] \otimes_k \Hom_\Acat(X,Y)$.
			\item For every object $X$, the unit is $1 \otimes \id_X$.
			\item The $A_\infty$-structure is given by
			\[\forall d>0, \; \mu^d_{k[t,t^{-1}] \otimes \Acat}(t^{\alpha_d} \otimes a_d, \dots, t^{\alpha_1}\otimes a_1) = (-1)^{\d\sum_{i<j} \alpha_i \|a_j\|} t^{\alpha_d + \dots + \alpha_1} \otimes \mu_\Acat^d(a_d,\dots,a_1)\]
		\end{itemize}
	\end{definition}
	
	\begin{remark}
		It is a straightforward to check that this is an $A_\infty$-category. If $\Acat$ is a DG-category, we recover the tensor product of DG-categories (see \cite[Section 6.1]{KelDG}).
	\end{remark}
	
	\begin{proposition}
		\label{equitwo}
		There is an $A_\infty$-isomorphism $\Z \Acat * \Z \simeq \Z(k[t,t^{-1}] \otimes_k \Acat)$. This isomorphism induces a quasi-equivalence $\Tw(\Z \Acat * \Z) \overset{\sim}{\longrightarrow} \Tw(k[t,t^{-1}] \otimes_k \Acat)$.
	\end{proposition}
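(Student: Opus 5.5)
We construct an explicit strict $A_\infty$-isomorphism $\Phi: \Z \Acat * \Z \to \Z(k[t,t^{-1}] \otimes_k \Acat)$ and then apply the functoriality of $\Tw$.

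\textbf{Objects and morphisms of $\Phi$.} Objects of $\Z\Acat * \Z$ are the objects $\{X[p]\}$ of $\Z\Acat$. Objects of $\Z(k[t,t^{-1}]\otimes\Acat)$ are shifts $X[p]$ of objects of $\Acat$. We set $\Phi\{X[p]\} := X[p]$ on indecomposables and extend additively. A homogeneous morphism $s^{p+\ell}_q \otimes a \otimes \sigma^\ell : \{X[p]\} \to \{Y[q]\}$ has degree $p+\ell-q+|a|$. We send it to
\[\Phi(s^{p+\ell}_q \otimes a \otimes \sigma^\ell) := (-1)^{\epsilon(p,q,\ell,a)}\, s^p_q \otimes t^\ell \otimes a,\]
which has the same degree $p-q+\ell+|a|$. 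The sign $\epsilon$ is to be determined; the natural first guess is $\epsilon = \ell\, q$ or $\epsilon = \ell\, p$. Componentwise, $\Phi$ is the identity of $\bigoplus_\ell \Hom_\Acat(X,Y)$ up to sign. It is therefore bijective on morphism spaces, and it is bijective on objects, so any choice of $\epsilon$ that makes $\Phi$ a strict $A_\infty$-functor automatically makes it an isomorphism. The inverse is given by the same formula with the same sign.

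\textbf{Compatibility with the structure maps.} We check compatibility with all $\mu^d$ on composable strings $s^{p_{i-1}+\ell_i}_{p_i}\otimes a_i\otimes\sigma^{\ell_i}$.

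On the left, we unwind the definition of $\mu_{\Z\Acat*\Z}$. Applying $\sigma^{\ell_d+\dots+\ell_{i+1}}$ to the $i$-th entry produces signs $(-1)^{(\ell_d+\dots+\ell_{i+1})(p_{i-1}+\ell_i-p_i)}$, from the rule $\sigma(s^p_q\otimes a) = (-1)^{p-q}s^{p+1}_{q+1}\otimes a$. We then apply the $\Z\Acat$ sign $\sum_{i<j}(\text{shift difference}_i)\|a_j\|$, with shifted indices.

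On the right, $\mu_{\Z(k[t,t^{-1}]\otimes\Acat)}$ produces the sign $\sum_{i<j}(p_{i-1}-p_i)(\ell_j+\|a_j\|)$ from the outer $\Z$-construction. Here the inner morphism $t^{\ell_j}\otimes a_j$ has reduced degree $\ell_j+\|a_j\|$. It also produces the sign $\sum_{i<j}\ell_i\|a_j\|$ from the tensor with $k[t,t^{-1}]$.

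Both sides equal $\pm\, s^{p_0}_{p_d}\otimes t^{\sum\ell_i}\otimes\mu^d_\Acat(a_d,\dots,a_1)$ (with $\sigma^{\sum \ell_i}$ on the left). The whole verification therefore reduces to a mod-2 identity between quadratic expressions in the $p_i,\ell_i,\|a_i\|$, corrected by $\epsilon$. It closely mirrors the computation $\top=\perp_1+\perp_2+\perp_3$ in the proof of Theorem~\ref{equione}. It is simplest to first do $d=2$, which pins down $\epsilon$, and then verify the general $d$ by the same bookkeeping. Compatibility with units is immediate since $\Phi(s^p_p\otimes\id\otimes\sigma^0)=s^p_p\otimes 1\otimes\id$.

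\textbf{Passing to twisted complexes.} $\Phi$ induces a strict isomorphism $\Tw(\Phi):\Tw(\Z\Acat*\Z)\to\Tw(\Z(k[t,t^{-1}]\otimes\Acat))$. Next we need $\Tw(\Z\Bcat)\simeq\Tw(\Bcat)$ for $\Bcat=k[t,t^{-1}]\otimes\Acat$. The inclusion $\Bcat\hookrightarrow\Z\Bcat\hookrightarrow\Tw(\Bcat)$ is cohomologically fully faithful, and its image, namely all shifts of objects of $\Bcat$, generates $\Tw(\Bcat)$. Lemma~\ref{Seidel} then gives a quasi-equivalence $\Tw(\Z\Bcat)\to\Tw(\Bcat)$. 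Composing the two yields the claimed quasi-equivalence.

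\textbf{Main obstacle.} The main obstacle is finding the correct sign $\epsilon$ and verifying the sign identity for arbitrary $d$. If neither $\ell q$ nor $\ell p$ works, I would try $\epsilon=\ell q+\binom{\ell}{2}$. The term $\binom{\ell}{2}$ absorbs the $\ell_1\ell_2$ cross term, analogous to $\perp_2$ in Theorem~\ref{equione}.
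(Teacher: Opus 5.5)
Your overall architecture matches the paper's. You identify $s^{p+\ell}_q\otimes a\otimes\sigma^\ell$ with $\pm\, s^p_q\otimes t^\ell\otimes a$, observe that this is bijective on objects and morphisms, and finish with Lemma~\ref{Seidel}. You factor through $\Tw(\Z\Bcat)$, while the paper applies the lemma directly to $\Z\Acat*\Z\to\Tw(k[t,t^{-1}]\otimes_k\Acat)$; that difference is immaterial.

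The gap is that the only non-formal step, the choice of sign, is never settled. Every candidate you name fails under the signed action $\sigma(s^p_q\otimes a)=(-1)^{p-q}s^{p+1}_{q+1}\otimes a$ that you yourself use. Take $d=2$ with inputs $s^{p_1+\ell_1}_{p_2}\otimes a_1\otimes\sigma^{\ell_1}$ and $s^{p_2+\ell_2}_{p_3}\otimes a_2\otimes\sigma^{\ell_2}$. The left side picks up $(p_1+\ell_1-p_2)(\ell_2+\|a_2\|)$, from $\sigma^{\ell_2}$ together with $\mu^2_{\Z\Acat}$. The right side picks up $(p_1-p_2)(\ell_2+\|a_2\|)+\ell_1\|a_2\|$. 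So you need
\[\epsilon(\ell_1+\ell_2,p_1,p_3)-\epsilon(\ell_1,p_1,p_2)-\epsilon(\ell_2,p_2,p_3)\equiv\ell_1\ell_2 \pmod 2 .\]
Your three candidates give the following left-hand sides:
\begin{itemize}
\item $\epsilon=\ell q$ gives $\ell_1(p_3-p_2)$;
\item $\epsilon=\ell p$ gives $\ell_2(p_1-p_2)$;
\item $\epsilon=\ell q+\ell(\ell-1)/2$ gives $\ell_1(p_3-p_2)+\ell_1\ell_2$.
\end{itemize}
None of these equals $\ell_1\ell_2$ for all values of the (unconstrained) parameters; taking $a_1=a_2=\mathrm{id}$ shows this genuinely breaks functoriality. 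Incidentally, $\epsilon=\ell p$ would be the right sign for the unsigned shift action, which is presumably where that guess comes from.

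Your instinct that a term quadratic in $\ell$ should absorb the cross term is correct. However, the $p$- and $q$-dependent part must be dropped entirely, because the $p$-dependent contributions on the two sides already agree. The paper takes $\epsilon=\ell(\ell+1)/2$. The choice $\ell(\ell-1)/2$ works equally well, since the two differ by the additive term $\ell$.

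For general $d$, write the $i$-th input as $s^{p_i+\ell_i}_{p_{i+1}}\otimes a_i\otimes\sigma^{\ell_i}$ and set $\ell=\sum_i\ell_i$. The left side carries $\sum_{i<j}(p_i+\ell_i-p_{i+1})(\ell_j+\|a_j\|)$. The right side carries $\sum_{i<j}(p_i-p_{i+1})(\ell_j+\|a_j\|)+\sum_{i<j}\ell_i\|a_j\|$. Their difference is $\sum_{i<j}\ell_i\ell_j$, which is exactly $\ell(\ell+1)/2-\sum_i\ell_i(\ell_i+1)/2$.

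Without exhibiting such a sign, your proposal only asserts that some $\epsilon$ will make the identity hold. So the $A_\infty$-isomorphism, which is the real content of the proposition, is not yet established.
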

	
	\begin{proof}
		We construct the $A_\infty$-functor 
		\[F: \Z \Acat * \Z \to \Z(k[t,t^{-1}] \otimes_k \Acat)\]
		\noindent in the following way.
		\begin{itemize}
			\item For all objects $X$ in $\Acat$ and integers $p$, set $F(\{X[p]\}):= X[p]$;
			\item If $s^{p+\ell}_q \otimes a \otimes \sigma^\ell: \{X[p]\} \to \{Y[q]\}$ is a morphism in $\Z \Acat * \Z$, set 
			\[F(s^{p+\ell}_q \otimes a \otimes \sigma^\ell) = (-1)^{\frac{\ell(\ell+1)}{2}} s^p_q \otimes t^\ell \otimes a \]
		\end{itemize}
		We now check that it is an $A_\infty$-functor. 
		
		\noindent Let $d>0$ and $s^{p_d+\ell_d}_{p_{d+1}} \otimes a_d \otimes \sigma^{\ell_d},\dots,s^{p_1+\ell_1}_{p_2} \otimes a_1 \otimes \sigma^{\ell_1}$ be composable morphisms. Let $\ell:= \d\sum_{i=1}^d \ell_i$, we have:
		
		\begin{align*}
			& F(\mu^d_{\Z\Acat * \Z}(s^{p_d+\ell_d}_{p_{d+1}} \otimes a_d \otimes \sigma^{\ell_d},\dots,s^{p_1+\ell_1}_{p_2} \otimes a_1 \otimes \sigma^{\ell_1})) \\
			&= F((-1)^{\d\sum_{i<j}(p_i + \ell_i - p_{i+1}) \ell_j} \mu^d_{\Z \Acat}(s^{p_d+\ell_d}_{p_{d+1}} \otimes a_d,\dots,s^{p_1+\ell}_{p_2 + \ell-\ell_1} \otimes a_1) \otimes \sigma^\ell) \\
			&= F((-1)^{\d\sum_{i<j}(p_i + \ell_i - p_{i+1})(\ell_j + \|a_j\|)} s^{p_1+\ell}_{p_{d+1}} \otimes \mu^d_\Acat(a_d,\dots,a_1) \otimes \sigma^\ell) \\
			&= (-1)^{\d\sum_{i<j}(p_i + \ell_i - p_{i+1})(\ell_j + \|a_j\|) + \frac{\ell(\ell+1)}{2}} s^{p_1}_{p_{d+1}} \otimes t^\ell \otimes \mu^d_\Acat(a_d,\dots,a_1)
		\end{align*}
		
		Conversely, we have:
		
		\begin{align*}
			& \mu^d_{\Z(k[t,t^{-1}] \otimes_k \Acat)}(F(s^{p_d+\ell_d}_{p_{d+1}} \otimes a_d \otimes \sigma^{\ell_d}),\dots,F(s^{p_1+\ell_1}_{p_2} \otimes a_1 \otimes \sigma^{\ell_1})) \\
			&= (-1)^{\d\sum_{i=1}^{d} \frac{\ell_i(\ell_i+1)}{2}} \mu^d_{\Z(k[t,t^{-1}] \otimes_k \Acat)}(s^{p_d}_{p_{d+1}} \otimes t^{\ell_d} \otimes a_d, \dots, s^{p_1}_{p_2} \otimes t^{\ell_1} \otimes a_1) \\
			&= (-1)^{\d\sum_{i=1}^{d} \frac{\ell_i(\ell_i+1)}{2} + \sum_{i<j} (p_i - p_{i+1})(\ell_j + \|a_j\|)} s^{p_1}_{p_{d+1}} \otimes \mu^d_{k[t,t^{-1}] \otimes_k \Acat}(t^{\ell_d} \otimes a_d, \dots, t^{\ell_1} \otimes a_1) \\
			&= (-1)^{\d\sum_{i=1}^{d} \frac{\ell_i(\ell_i+1)}{2} + \sum_{i<j} (p_i - p_{i+1})(\ell_j + \|a_j\|) + \ell_i \|a_j\|}s^{p_1}_{p_{d+1}} \otimes t^\ell \otimes \mu^d_\Acat(a_d,\dots,a_1)
		\end{align*}
		As the signs coincide, we have shown that $F$ is indeed an $A_\infty$-functor. It is straightforward to check that $F$ is an isomorphism.
		 
		\noindent It follows that $F$ induces a cohomologically fully faithful functor $\Z \Acat * \Z \to \Tw(k[t,t^{-1}] \otimes_k \Acat)$. As $k[t,t^{-1}] \otimes_k \Acat$ is in the image of this functor and is a generator of $\Tw(k[t,t^{-1}] \otimes_k \Acat)$, by Lemma \ref{Seidel}, $F$ induces a quasi-equivalence $\Tw(\Z \Acat * \Z) \overset{\sim}{\to} \Tw(k[t,t^{-1}] \otimes_k \Acat)$.
	\end{proof}
	
	\begin{remark}
		In \cite{Chr}, the $A_\infty$-category $\Tw(k[t,t^{-1}] \otimes_k \Acat)$ is the prototype for the one-periodic Fukaya category of a compact surface with boundary and stops. In this case, $\Acat$ is a category given by a dissection on the surface with the construction of \cite{HKK}. In particular, as $\Acat$ is a category, we have $\Tw(k[t,t^{-1}] \otimes_k \Acat) \simeq \Dcat^b(k[t,t^{-1}] \otimes_k \Acat)$.
	\end{remark}
	
	\subsection{The $A_\infty$-subcategory $\Scat$}
	
	\noindent To further simplify the description of $\Tw(\Tw(\Acat) * \Z)$ given in the previous section, we give a full $A_\infty$-subcategory $\Scat$ of $\Tw(k[t,t^{-1}] \otimes_k \Acat)$ such that the inclusion functor $\Scat \hookrightarrow \Tw(k[t,t^{-1}] \otimes_k \Acat)$ is a quasi-equivalence.
	
	\begin{definition}
		\label{defScat}
		Let $\Scat$ be the full $A_\infty$-subcategory of $\Tw(k[t,t^{-1}] \otimes_k \Acat)$ of objects of the form $(\d\bigoplus_{i \in I} X_i[0],\delta)$. Denote $\iota: \Scat \to \Tw(k[t,t^{-1}] \otimes_k \Acat)$ the inclusion functor (it is an $A_\infty$-functor for the $A_\infty$-structure on $\Scat$ induced by the one on $\Tw(k[t,t^{-1}] \otimes_k \Acat)$).
	\end{definition}
	
	\begin{proposition}
		\label{retract}
		The functor $Z^0(\iota)$ has a retraction (left inverse) and thus is an equivalence of categories. Furthermore, $\iota$ is a quasi-equivalence.
	\end{proposition}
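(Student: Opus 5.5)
The key observation is that in $k[t,t^{-1}] \otimes_k \Acat$ every object is isomorphic to all of its shifts, and the isomorphism is strict, given by $t$. So the plan is to construct an explicit "unshifting" functor $R$ from $\Tw(k[t,t^{-1}] \otimes_k \Acat)$ to $\Scat$, and to show that $Z^0(R)\circ Z^0(\iota)$ is the identity while $\iota \circ R$ is isomorphic to the identity.

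\textbf{The functor $R$.} For an object $X = \bigoplus_{i \in I} X_i[p_i]$, set $R(X) = \bigoplus_{i \in I} X_i[0]$. For a morphism $s^{p}_{q} \otimes t^\ell \otimes a : X_i[p] \to Y_j[q]$, set
\[R(s^{p}_{q} \otimes t^\ell \otimes a) = \epsilon(p,q,\ell,a)\, s^0_0 \otimes t^{\ell + p - q} \otimes a,\]
where $\epsilon$ is a sign to be fixed. This assignment preserves degree, because $p-q+\ell+|a| = (\ell+p-q)+|a|$. On a twisted complex $(X,\delta_X)$ we put $R(X,\delta_X) = (R(X), R(\delta_X))$.

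\textbf{Choice of sign.} I would take $\epsilon$ modelled on the sign in the proof of Proposition~\ref{equitwo}, for instance $(-1)^{\frac{(p-q)(p-q+1)}{2} + p\|a\| + \dots}$. In fact $\epsilon$ should be exactly the sign that makes $R$ a strict $A_\infty$-functor $\Z(k[t,t^{-1}]\otimes\Acat) \to \Z(k[t,t^{-1}]\otimes\Acat)$ landing in shift-$0$ objects. Conceptually, $R$ is conjugation by the isomorphisms $1\otimes s^{p}_0 \otimes t^{p}$ between $X[p]$ and $X[0]$. Since these are strict isomorphisms in $\Z(k[t,t^{-1}]\otimes\Acat)$, every $\mu^d$ with $d\ge 3$ vanishes on them and $\mu^2$ is just composition with a sign. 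This is what should make $R$ compatible with every $\mu^d_{\Z(\dots)}$.

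\textbf{Consequences of strictness.} Once $R$ is a strict $A_\infty$-functor on $\Z(k[t,t^{-1}]\otimes\Acat)$, two things follow:
\begin{itemize}
\item $R(\delta_X)$ satisfies the Maurer–Cartan equation.
\item $R(\delta_X)$ stays strictly lower triangular, because $R$ keeps the same index set and filtration.
\end{itemize}
Hence $R$ extends to $\Tw$, which is the functoriality of $\Tw$ noted in the earlier remark. Restricted to $\Scat$, where all $p_i = 0$, the formula gives $R = \id$, provided the sign is normalised so that $\epsilon(0,0,\ell,a)=1$. This gives $Z^0(R)\circ Z^0(\iota) = \id$, the retraction claimed in the statement.

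\textbf{Equivalence.} For the second half, I would show that each object $(X,\delta_X)$ is isomorphic in $Z^0$ to $\iota R(X,\delta_X)$. The candidate is the diagonal morphism $\phi = \sum_i \pm s^{p_i}_0\otimes t^{p_i}\otimes \id_{X_i}$, of degree $0$. Its inverse is $\sum_i \pm s^0_{p_i}\otimes t^{-p_i}\otimes\id_{X_i}$. I need $\mu^1_{\Tw}(\phi)=0$, which amounts to $\mu^2(R(\delta_X),\phi) = \pm\mu^2(\phi,\delta_X)$ together with the vanishing of the higher terms $\mu^{d}(\delta,\dots,\phi,\dots,\delta)$. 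The latter holds by strict unitality, since $\phi$ is built from identities. So $\phi$ is a cocycle isomorphism, and $Z^0(\iota)$ is essentially surjective. It is fully faithful because $\iota$ is a full inclusion. Therefore $Z^0(\iota)$ is an equivalence. Since $\iota$ is full on all $\Hom$ complexes, $H^0(\iota)$ is fully faithful, and it is essentially surjective by the same $\phi$. Hence $\iota$ is a quasi-equivalence.

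\textbf{Main obstacle.} The hard step is the sign bookkeeping: choosing $\epsilon$ so that $R$ commutes with all $\mu^d$, including the Koszul signs $(-1)^{\sum(p_{i-1}-p_i)\|a_j\|}$ of $\Z\Acat$ and $(-1)^{\sum \alpha_i\|a_j\|}$ of the $t$-tensor. I would determine $\epsilon$ by imposing compatibility with $\mu^2$ on the generating isomorphisms. Then, as in the proof of Theorem~\ref{equione}, I would verify the general $\mu^d$ identity by comparing exponents modulo $2$.
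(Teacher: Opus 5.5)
Your plan is correct and rests on the same mechanism as the paper. Diagonal closed degree-$0$ isomorphisms built from $s^{p_i}_0\otimes t^{-p_i}\otimes\id_{X_i}$ identify every twisted complex with one whose shifts are all zero. Since $\iota$ is a full inclusion, this gives essential surjectivity of $Z^0(\iota)$ and $H^0(\iota)$, hence the quasi-equivalence.

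\textbf{Where the routes differ.} The difference is the retraction. The paper writes down $r\delta_X$ by hand and checks that $\varphi_X$ is closed; only $\mu^1$ and $\mu^2$ intervene, by strict unitality. It then defines $r$ only on $Z^0$, by $r(f)=\varphi_Y\circ f\circ\varphi_X^{-1}$. You instead promote the conjugation to a strict $A_\infty$-functor $R$ on $\Z(k[t,t^{-1}]\otimes_k\Acat)$, and hence on $\Tw$. This costs a check against every $\mu^d$, but it buys two things:
\begin{itemize}
\item a strict $A_\infty$ left inverse of $\iota$, not just a retraction of $Z^0(\iota)$;
\item the Maurer--Cartan equation for $R(\delta_X)$ for free, a point the paper leaves implicit for $r\delta_X$.
\end{itemize}

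\textbf{The sign check is short.} With the paper's sign rules, take $R(s^p_q\otimes t^\ell\otimes a)=(-1)^{p\ell}\,s^0_0\otimes t^{\ell+p-q}\otimes a$. For composable inputs $s^{p_{j-1}}_{p_j}\otimes t^{\ell_j}\otimes a_j$, the two sign exponents differ by
$\sum_{i<j}(p_{i-1}-p_i)\ell_j+p_0\sum_j\ell_j-\sum_j p_{j-1}\ell_j=2p_0\sum_j\ell_j-2\sum_j p_{j-1}\ell_j$, which is even. On differentials this sign agrees with the paper's $r\delta_X$.

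\textbf{Two corrections.}
\begin{itemize}
\item The conjugating isomorphism must be $s^{p}_0\otimes t^{-p}\otimes\id$, not $s^p_0\otimes t^{p}\otimes\id$, which has degree $2p$. Your formula $t^{\ell+p-q}$ for $R$ already corresponds to the correct choice.
\item The signs $c_i$ in $\phi$ cannot in general all be $+$. With $R$ as above, $\mu^2(R\delta_X,\phi)+\mu^2(\phi,\delta_X)=0$ requires $c_ic_j=(-1)^{p_i-p_j}$ whenever the $(i,j)$-component of $\delta_X$ is nonzero. So take $\phi=\sum_i(-1)^{p_i}s^{p_i}_0\otimes t^{-p_i}\otimes\id_{X_i}$, whose inverse is $\sum_i s^0_{p_i}\otimes t^{p_i}\otimes\id_{X_i}$.
\end{itemize}
The paper's unsigned $\varphi_X$ appears to miss this factor. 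In its first $\mu^2$ computation, the $\Z$-sign should involve the reduced degree $\|t^{-p_j}\otimes\id_{X_j}\|=-p_j-1$, not $-p_j$.
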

	
	\begin{proof}
		Let $X = (\d\sum_{i \in I} X_i[p_i],\delta_X:= \d\sum_{i,j \in I} \d\sum_{\ell \in \Z} s^{p_i}_{p_j} \otimes t^\ell \otimes a^\ell_{i,j})$ be an object in $\Tw(k[t,t^{-1}] \otimes_k \Acat)$. First we give an isomorphism between $X$ and an object $rX$ of $\Scat$ defined as follows
		\[rX = \left(\bigoplus_{i \in I} X_i[0], r\delta_X = \sum_{i,j \in I} \sum_{\ell \in \Z} (-1)^{(p_i+1)\|a^\ell_{i,j}\| -p_j(p_i-p_j)-\ell} s^0_0 \otimes t^{- \|a^\ell_{i,j}\|} \otimes a^\ell_{i,j} \right)\] 
		The fact that $r\delta_X$ is strictly lower triangular follows from the fact that it the case for $\delta_X$. 
		
		\noindent The isomorphism $\varphi_X: X \to rX$ is defined by
		\[\varphi_X = \sum_{i \in I} s^{p_i}_0 \otimes t^{-p_i} \otimes \id_{X_i}\]
		The next step is to show that $\mu^1_{\Tw(k[t,t^{-1}] \otimes_k \Acat)}(\varphi_X) = 0$. Using linearity and the fact that $\Acat$ is strictly unital, this computation is equivalent to showing that for all $i,j$ in $I$ and all integers $\ell$, we have 
		\begin{align*}
			&\mu^2_{\Z(k[t,t^{-1}] \otimes_k \Acat)}(s^{p_j}_0 \otimes t^{-p_j} \otimes \id_{X_j},s^{p_i}_{p_j} \otimes t^\ell \otimes a^\ell_{i,j}) \\
			&+ \mu^2_{\Z(k[t,t^{-1}] \otimes_k \Acat)}( (-1)^{(p_i+1)\|a^\ell_{i,j}\| -p_j(p_i-p_j)-\ell}s^0_0 \otimes t^{- \|a^\ell_{i,j}\|} \otimes a^\ell_{i,j},s^{p_i}_0 \otimes t^{-p_i} \otimes \id_{X_i}) = 0
		\end{align*}
		Computing the first term of the sum, we have
		
		\begin{align*}
			& \mu^2_{\Z(k[t,t^{-1}] \otimes_k \Acat)}(s^{p_j}_0 \otimes t^{-p_j} \otimes \id_{X_j},s^{p_i}_{p_j} \otimes t^\ell \otimes a^\ell_{i,j}) \\
			&= (-1)^{-p_j(p_i-p_j)} s^{p_i}_0 \otimes \mu^2_{k[t,t^{-1}] \otimes_k \Acat}(t^{-p_j} \otimes \id_{X_j}, t^\ell \otimes a^\ell_{i,j}) \\
			&= (-1)^{-p_j(p_i-p_j)-\ell} s^{p_i}_0 \otimes t^{\ell-p_j} \otimes \mu^2_\Acat(\id_{X_j},a^\ell_{i,j}) \\
			&= (-1)^{-p_j(p_i-p_j)-\ell + |a^\ell_{i,j}|} s^{p_i}_0 \otimes t^{\ell-p_j} \otimes a^\ell_{i,j}
		\end{align*}
		
		\noindent Computing the second term of the sum we have
		
		\begin{align*}
			& \mu^2_{\Z(k[t,t^{-1}] \otimes_k \Acat)}((-1)^{(p_i+1)\|a^\ell_{i,j}\| -p_j(p_i-p_j)-\ell}s^0_0 \otimes t^{- \|a^\ell_{i,j}\|} \otimes a^\ell_{i,j},s^{p_i}_0 \otimes t^{-p_i} \otimes \id_{X_i}) \\
			&= (-1)^{(p_i+1)\|a^\ell_{i,j}\| -p_j(p_i-p_j)-\ell}s^{p_i}_0 \otimes \mu^2_{k[t,t^{-1}] \otimes_k \Acat}(t^{- \|a^\ell_{i,j}\|} \otimes a^\ell_{i,j},t^{-p_i} \otimes \id_{X_i}) \\
			&= (-1)^{\|a^\ell_{i,j}\|  -p_j(p_i-p_j)-\ell} s^{p_i}_0 \otimes t^{-\|a^\ell_{i,j}\| - p_i} \otimes a^\ell_{i,j}
		\end{align*}
		As $p_i - p_j +\ell + \|a^\ell_{i,j}\| = 0$, we have the wanted relation, and $\varphi_X$ is an morphism in $Z^0(\Tw(k[t,t^{-1}] \otimes_k \Acat))$. It is an isomorphism with inverse $\varphi_X^{-1}:= \d\sum_{i \in I} s^0_{p_i} \otimes t^{p_i} \otimes \id_{X_i}$.
		
		Now define the retraction $r$ of $Z^0(\iota)$ as follows:
		\begin{itemize}
			\item if $X$ is an object of $Z^0(\Tw(k[t,t^{-1}] \otimes_k \Acat))$ then $rX$ is defined as above;
			\item if $f:X \to Y$ is a morphism in $Z^0(\Tw(k[t,t^{-1}] \otimes_k \Acat))$ then $r(f):= \varphi_Y \circ f\circ\varphi_X^{-1}$
		\end{itemize}
		By construction, $r:Z^0(\Tw(k[t,t^{-1}] \otimes_k \Acat)) \to Z^0(\Scat)$ is a functor. As $\varphi_X = \id_X$ for $X$ in $\Scat$, $r$ is a retraction of $Z^0(\iota)$. 
		
		\noindent As $Z^0(\iota)$ is an equivalence of categories, so is $H^0(\iota)$.
	\end{proof}
	
	\begin{corollary}
		\label{totalequi}
		There exists an equivalence of triangulated categories between $H^0(\Tw(\Tw(\Acat)*\Z))$ and $H^0(\Scat)$.
	\end{corollary}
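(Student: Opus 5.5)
The plan is to compose the three quasi-equivalences already established and pass to $H^0$. Theorem~\ref{equione} gives a quasi-equivalence $\tilde{\nu}: \Tw(\Tw(\Acat)*\Z) \to \Tw(\Z\Acat*\Z)$. Proposition~\ref{equitwo} gives a quasi-equivalence $\Tw(\Z\Acat*\Z) \to \Tw(k[t,t^{-1}]\otimes_k\Acat)$. Proposition~\ref{retract} shows that the inclusion $\iota: \Scat \to \Tw(k[t,t^{-1}]\otimes_k\Acat)$ is a quasi-equivalence. Applying $H^0$ to each of these yields equivalences of ordinary $k$-linear categories:
\[H^0(\Tw(\Tw(\Acat)*\Z)) \simeq H^0(\Tw(\Z\Acat*\Z)) \simeq H^0(\Tw(k[t,t^{-1}]\otimes_k\Acat)) \simeq H^0(\Scat),\]
where the last equivalence uses a quasi-inverse of $H^0(\iota)$ (for instance, the functor induced by the retraction $r$ of Proposition~\ref{retract}).

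Next we must check that these equivalences are triangulated. Each category $\Tw(-)$ is triangulated by \cite[Lemma 3.28]{Sei}. The category $\Scat$ is a full subcategory of a triangulated $A_\infty$-category, and by Proposition~\ref{retract} it is essentially everything, so $H^0(\Scat)$ inherits a triangulated structure: its exact triangles are those triangles isomorphic in $H^0(\Tw(k[t,t^{-1}]\otimes_k\Acat))$ to exact triangles, and its shift is transported along $r$. The key point is that a quasi-equivalence of triangulated $A_\infty$-categories induces an exact functor on $H^0$, because $A_\infty$-functors preserve exact triangles in the sense of \cite[Section 3h]{Sei}. Concretely, they preserve mapping cones up to quasi-isomorphism, and they commute with shifts up to isomorphism, since shifts are defined as cones over the zero object or via the $\Z$-action on $\Z\Acat$. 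Each of the functors $\tilde{\nu}$, $\Tw(F)$ and $\iota$ is a strict $A_\infty$-functor between triangulated $A_\infty$-categories, so each induces an exact functor on $H^0$. Since a fully faithful, essentially surjective exact functor is an equivalence of triangulated categories, the composite is one as well.

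The main obstacle is exactness. Specifically, we must confirm that $\tilde{\nu}$, which is a priori defined only on $\Tw(\Acat)*\Z$ and then extended via Lemma~\ref{Seidel}, and $\iota$ (only a subcategory inclusion) respect the triangulated structures. For the extended functors I would invoke the standard fact from \cite[Section 3]{Sei} that any $A_\infty$-functor out of $\Tw(\Bcat)$ obtained by Lemma~\ref{Seidel} preserves cones, because it is built to send twisted complexes to the corresponding iterated cones. For $\iota$, the structure on $H^0(\Scat)$ is by definition transported, so exactness holds tautologically.
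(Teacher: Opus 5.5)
Your proposal is correct and follows the paper's argument. The paper's proof composes the quasi-equivalences of Theorem~\ref{equione} and Proposition~\ref{equitwo}, together with the quasi-equivalence $\iota$ of Proposition~\ref{retract} (which it uses without citing), and passes to $H^0$; your discussion of exactness, with the triangulated structure on $H^0(\Scat)$ transported along $H^0(\iota)$ as in Remark~\ref{suspH0}, spells out what the paper leaves implicit.
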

	
	\begin{proof}
		This is the composition of quasi-equivalences obtained in Theorem~\ref{equione} and Proposition~\ref{equitwo}. 
	\end{proof}

	\noindent We now give a full description of $\Scat$ and $H^0(\Scat)$.
	
	\begin{itemize}
		\item Objects of $\Scat$ are pairs $(\d\bigoplus_{i \in I} X_i,\delta_X:= \d\sum_{i,j \in I} t^{- \|a_{i,j}\|} \otimes a_{i,j})$ where $(X_i)_{i \in I}$ are objects of $\Acat$ and $(a_{i,j}: X_i \to X_j)_{i,j \in I}$ are morphisms in $\Acat$.
		\item Morphisms in $\Scat$ between $X = (\d\bigoplus_{i \in I} X_i, \delta_X)$ and $Y = (\d\bigoplus_{j \in J} Y_j, \delta_Y)$ are of the form 
		\[\d\sum_{\substack{i \in I \\ j \in J}} \sum_{\ell \in \Z}  t^{\ell} \otimes a^\ell_{i,j}\]
		\item The $A_\infty$-structure is given by the one on $\Tw(k[t,t^{-1}] \otimes_k \Acat)$.
	\end{itemize} 
	
	\noindent We can now describe $H^0(\Scat)$
	
	\begin{itemize}
		\item The objects are the ones of $\Scat$.
		\item Morphisms between $X = (\d\bigoplus_{i \in I} X_i, \delta_X)$ and $Y = (\d\bigoplus_{j \in J} Y_j, \delta_Y)$ are of the form 
		\[f = \d\sum_{i \in I} \sum_{j \in J} t^{-|a_{i,j}|} \otimes a_{i,j}\]
		and must verify $\mu^1_{\Tw(k[t,t^{-1}] \otimes_k \Acat)}(f) = 0$.
		\item Homotopies are given by morphisms in the image of $\mu^1_{\Tw(k[t,t^{-1}] \otimes_k \Acat)}$.
	\end{itemize}
	
	\begin{remark}
		
	\label{suspH0}
	
	The category $H^0(\Scat)$ inherits a triangulated structure from $H^0(\Tw(k[t,t^{-1}] \otimes_k \Acat))$. In particular it has a suspension functor given by 
	\[\Sigma X = r(X[1])\]
	
	\noindent More explicitely, if $X = (\d\bigoplus_{i \in I} X_i,\delta_X= \d\sum_{i,j \in I} t^{- \|a_{i,j}\|} \otimes a_{i,j})$ is an object of $H^0(\Scat)$, its suspension is
	\[\Sigma X = (\d\bigoplus_{i \in I} X_i,\delta_X= \d\sum_{i,j \in I} (-1)^{\|a_{i,j}\|}t^{- \|a_{i,j}\|} \otimes a_{i,j})\]
	
	\end{remark}
	
	\noindent In the next section we will give another description when $\Acat$ is given by a graded category.
	
	\section{The case of graded categories and Cohen-Macaulay modules}
	
	\label{sectionCM}
	
	Let $(Q,I)$ be a graded bound quiver. Consider $\Acat$ the $k$-linear $\Z$-graded category obtained from the pair $(Q,I)$ and equip it with the $A_\infty$-structure given in Remark~\ref{Ainftystr}. We assume that the algebra $A = kQ/I$ is finite dimensional and of finite global dimension. Let $\mod \, A$ be the category of finitely generated right $A$-modules. Every object of $\Acat$ corresponds to a vertex of $Q$ and thus to a unique indecomposable projective summand of $A$. From now on, objects of $\Acat$ are considered to be graded projective $A$-modules with this canonical grading and morphisms in $\Acat$ are considered to be graded morphisms between graded projective $A$-modules.
	
	\noindent With this setup, Corollary~\ref{pretrequiv} induces the following equivalence of triangulated categories.
	
	\begin{proposition}
		There is an equivalence of triangulated categories
		\[(\per(A_{DG})/[1])_{tr} \simeq H^0(\Tw(\Tw(\Acat)*\Z))\]
		where $A_{DG}$ is the DG-algebra $A$ with zero differential and $\per(A_{DG})$ is the triangulated category of perfect DG-$A$-modules.
	\end{proposition}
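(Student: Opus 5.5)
The plan is to chain together equivalences already established in the paper. Throughout I regard $\Acat$ as a DG-category with zero differential; by Remark~\ref{Ainftystr} it carries the $A_\infty$-structure with $\mu^1=0$ and $\mu^d=0$ for $d\geq 3$. Then $\Tw(\Acat)$ is again a DG-category, and $H^0(\Tw(\Acat))$ should be identified with $\per(A_{DG})$.

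\textbf{Step 1: $\Tw(\Acat)$ enhances $\per(A_{DG})$.}
\begin{itemize}
\item The objects of $\Acat$ are the indecomposable graded projectives $e_iA$, with graded morphisms $e_iA\to e_jA$ given by $e_jAe_i$.
\item Hence $\Acat$ is the full DG-subcategory of DG-$A_{DG}$-modules on the $e_iA$. Their direct sum is $A_{DG}$, which generates $\per(A_{DG})$ as a thick subcategory.
\item By Proposition~\ref{Kenji}, $\Tw(\Acat)$ is quasi-equivalent to $\pretr(\Acat)$, whose $H^0$ is the triangulated subcategory of $\Dcat(A_{DG})$ generated by the $e_iA$.
\end{itemize}
This subcategory is closed under shifts and cones. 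Idempotent completeness is a subtlety here: I would argue it using finite global dimension and finite dimensionality of $A$. Concretely, every perfect module is a finite iterated extension of shifts of the $e_iA$, which one gets by the usual filtration argument: take semi-free resolutions by finitely many graded projectives, made finite by finite global dimension. Hence $H^0(\Tw(\Acat))\simeq\per(A_{DG})$ as triangulated categories, and the suspension $[1]$ lifts to the invertible DG-functor given by the shift action of Definition~\ref{Ainftyaction}.

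\textbf{Step 2: identify the orbit category and its hull.} By definition, $(\per(A_{DG})/[1])_{tr}=H^0(\pretr(\Tw(\Acat)/[1]))$, using the DG-enhancement $\Tw(\Acat)$. This is legitimate because Keller's triangulated hull does not depend on the chosen enhancement up to quasi-equivalence, since quasi-equivalences induce quasi-equivalences of DG orbit categories and hence of pretriangulated hulls. Now apply Corollary~\ref{pretrequiv} with $F=[1]$, which is an invertible DG-functor on $\Tw(\Acat)$. Via Proposition~\ref{sameorbit} ($\Tw(\Acat)*\Z\cong\Tw(\Acat)/[1]$) and Proposition~\ref{Kenji}, this gives
\[H^0(\pretr(\Tw(\Acat)/[1]))\simeq H^0(\Tw(\Tw(\Acat)*\Z)).\]
The equivalence is triangulated because it is $H^0$ of a quasi-equivalence of pretriangulated/triangulated $A_\infty$-categories preserving exact triangles.

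\textbf{Main obstacle.} The delicate point is Step 1. It requires checking two things:
\begin{itemize}
\item the DG-structure on $\Tw(\Acat)$ coming from Remark~\ref{Ainftystr} (with its sign conventions) matches the DG-structure used in Keller's orbit construction;
\item the shift on $\Tw(\Acat)$ corresponds, up to natural isomorphism, to the suspension of $\per(A_{DG})$.
\end{itemize}
The sign check in the first point is routine. The identification in the second point is obtained from the Yoneda quasi-equivalence $y''$ of Proposition~\ref{Kenji}, which commutes with shifts up to a natural isomorphism. Under this comparison the orbit categories agree, since they are formed from a strictly invertible functor and Proposition~\ref{sameorbit} applies.
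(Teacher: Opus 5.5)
Your proposal is correct and follows the paper's route: you identify $\Tw(\Acat)$ as a DG-enhancement of $\per(A_{DG})$ whose shift lifts $[1]$, then combine Proposition~\ref{sameorbit} (with Remark~\ref{DGorb}) and Proposition~\ref{Kenji}, which is exactly what Corollary~\ref{pretrequiv} packages. Your Step 1 is more careful than the paper, which only asserts the identification with $\per(A_{DG})$ (and writes it as $H^0(\textup{DGmod}\,\Acat)$); you rightly note that finite global dimension is what removes the need for idempotent completion.
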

	
	\begin{proof}
		With the setup above, we have $H^0(\textup{DGmod} \, \Acat) \simeq \per(A_{DG})$. Thus we have $H^0(\Tw(\Acat) * \Z) \simeq \per(A_{DG})/[1]$ by virtue of remark \ref{DGorb} and Proposition~\ref{sameorbit}. Using Proposition~\ref{Kenji}, there is a quasi-equivalence
		\[\Tw(\Tw(\Acat)* \Z) \overset{\sim}{\longrightarrow} \pretr(\Tw(\Acat) * \Z)\]
		Thus we have the wanted equivalence of triangulated categories. 
	\end{proof}
	
	Combining it with the quasi-equivalence between $\Tw(\Tw(\Acat) * \Z)$ and $\Scat$ constructed in the previous section, the triangulated categories $(\per(A_{DG})/[1])_{tr}$ and $H^0(\Scat)$ are equivalent. 
	
	\subsection{Cohen-Macaulay modules and differential modules}
	
	\noindent The aim of this section is to give a description of $H^0(\Scat)$ in this setup in terms of maximal Cohen-Macaulay modules.
	
	\begin{definition}
		\label{Altimes}
		Let $A^\ltimes:= A \otimes_k k[\varepsilon]/\langle \varepsilon^2 \rangle$ be a graded $k$-algebra where $\varepsilon$ has degree 1. We consider $A^\ltimes$ as an ungraded algebra. Note that $A^\ltimes$ is not obtained as the tensor product of $A$ and of $k[\varepsilon]/\langle \varepsilon^2\rangle$ as ungraded algebras. Indeed, multiplication in $A^\ltimes$ still follows the Koszul sign rule:
		\[(a\otimes \varepsilon^\ell).(b \otimes \varepsilon^m) = (-1)^{\ell |b|} ab \otimes \varepsilon^{\ell+m}\]
		
		\noindent If $A$ is concentrated in degree zero, the algebra $A^\ltimes$ is the one defined in \cite{Win}.
		
		\noindent Let $\mod \, A^\ltimes$ be the category of finitely generated $A^\ltimes$-modules. We say that an $A^\ltimes$-module $M$ is maximal Cohen-Macaulay if 
		\[\forall \, i >0, \; \Ext^i_{A^\ltimes}(M,A^\ltimes) = 0\]
		We denote $\CM(A^\ltimes)$ the full subcategory of $\mod \, A^\ltimes$ consisting of maximal Cohen-Macaulay modules.
	\end{definition}
	
	\noindent In our case, as $A$ has finite global dimension, the algebra $A^\ltimes$ is Gorenstein (see \cite[Proposition~2.2]{AR}). Thus the category $\CM(A^\ltimes)$ is a Frobenius exact subcategory of $\mod \, A^\ltimes$ (see \cite{AR,Buc}). Projective-injective objects in this category are projective $A^\ltimes$-modules. Furthermore, in our context maximal Cohen-Macaulay modules over $A^\ltimes$ can be characterized in the following way.
		
	\begin{proposition}{\cite[Proposition~1.2.3]{Win}}
		\label{CMproj}
		Let $M$ be an (ungraded) $A^\ltimes$-module. Then $M$ is maximal Cohen-Macaulay if and only if its image by the restriction functor $\mod \, A^\ltimes \to \mod \, A$ is a projective $A$-module.
	\end{proposition}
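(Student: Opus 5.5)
We reduce the Cohen–Macaulay condition to a statement about $A$-modules and then use finite global dimension of $A$.

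The key structural fact is that $A^\ltimes$, as an ungraded algebra, is isomorphic as a left $A$-module and as a right $A$-module to $A \oplus A\varepsilon$. The sign in the Koszul rule only twists the action by the grading automorphism $a \mapsto (-1)^{|a|}a$. Hence restriction $\res: \mod A^\ltimes \to \mod A$ is exact and sends projectives to projectives. Its left adjoint is induction $-\otimes_A A^\ltimes$, and its right adjoint is coinduction $\Hom_A(A^\ltimes,-)$. Both are exact because $A^\ltimes$ is projective on either side over $A$. Moreover, $A^\ltimes$ is a Frobenius extension of $A$: we check directly that $\Hom_A(A^\ltimes, A) \simeq A^\ltimes$ as $A^\ltimes$-$A$-bimodules, possibly up to a twist by the grading automorphism. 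Consequently induction and coinduction agree up to an autoequivalence.

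\textbf{Step 1 (if $\res M$ is projective, then $M$ is CM).} If $\res M$ is projective over $A$, consider the canonical surjection $M \otimes_A A^\ltimes \to M$. Its kernel is $\res M \otimes \varepsilon$, so it is isomorphic to $M$ up to the grading twist. This gives a short exact sequence $0 \to M' \to P \to M \to 0$ with $P$ a projective $A^\ltimes$-module and $\res M' \simeq \res M$ projective. Iterating produces a complete projective resolution. To show $\Ext^i_{A^\ltimes}(M,A^\ltimes)=0$ directly, use adjunction: $\Ext^i_{A^\ltimes}(M, \Hom_A(A^\ltimes,A)) \simeq \Ext^i_A(\res M, A) = 0$ for $i>0$. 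Since $\Hom_A(A^\ltimes, A) \simeq A^\ltimes$ by the Frobenius property, the vanishing follows.

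\textbf{Step 2 (if $M$ is CM, then $\res M$ is projective).} Suppose $\Ext^i_{A^\ltimes}(M,A^\ltimes)=0$ for all $i>0$. By the same adjunction, $\Ext^i_A(\res M, A)=0$ for all $i>0$. Since $A$ has finite global dimension $n$, take a finite projective resolution of $\res M$. The vanishing of $\Ext^i_A(\res M,A)$, hence of $\Ext^i_A(\res M, P)$ for every projective $P$ by additivity, lets us split the resolution from the top down. Indeed, if $\operatorname{pd}(\res M) = m > 0$, then $\Ext^m_A(\res M, P_m) \neq 0$, since the last map $P_m \to P_{m-1}$ would otherwise split, contradicting minimality. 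This forces $m=0$, so $\res M$ is projective.

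The main obstacle is the Frobenius identification $\Hom_A(A^\ltimes,A) \simeq A^\ltimes$ as $(A^\ltimes,A)$-bimodules in the presence of the Koszul sign. Our plan is to write down the explicit map $\phi \mapsto \phi(\varepsilon) + \phi(1)\varepsilon$, suitably signed, and check linearity using $\varepsilon b = (-1)^{|b|} b\varepsilon$. If the result only matches up to the grading automorphism, this is harmless, because twisting by an automorphism preserves both projectivity and $\Ext$-vanishing.
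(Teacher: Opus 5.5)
The paper gives no argument of its own for this statement. It only cites \cite[Proposition~1.2.3]{Win}, and that result concerns $A$ concentrated in degree zero, where $A^\ltimes$ carries no Koszul sign.

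Your proof is correct and self-contained, so it fills this in. You show that $A^\ltimes$ is a Frobenius extension of $A$. You then get $\Ext^i_{A^\ltimes}(M,A^\ltimes)\cong\Ext^i_A(M_{|A},A)$ from the restriction/coinduction adjunction; this works because restriction is exact and preserves projectives, $A^\ltimes$ being free of rank two as a right $A$-module. After that, the ``if'' direction is immediate. The ``only if'' direction is the standard fact that, over an algebra of finite global dimension, a finitely generated module with $\Ext^{>0}(-,A)=0$ is projective.

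Compared with the citation, your route works verbatim in the graded, sign-twisted setting where the paper actually applies the statement. It also never uses that $A^\ltimes$ is Gorenstein.

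Three points to tidy up:

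\textbf{Sidedness.} For right modules, $\Hom_A(A^\ltimes,A)$ is an $(A,A^\ltimes)$-bimodule, not an $(A^\ltimes,A)$-bimodule. Your $\Ext$ computation only needs an isomorphism of right $A^\ltimes$-modules, and that holds with no twist.

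\textbf{The explicit map.} Put $\theta(a)=(-1)^{|a|}a$, so that $\varepsilon a=\theta(a)\varepsilon$, and $\phi_0(x+\varepsilon y)=y$. Then $b\mapsto\phi_0\cdot b$ is the required isomorphism, with inverse $\phi\mapsto\theta(\phi(\varepsilon))+\theta(\phi(1))\varepsilon$. The twist by $\theta$ appears only on the left $A$-structure, which does not matter here. Your unsigned formula $\phi\mapsto\phi(\varepsilon)+\phi(1)\varepsilon$ is not right linear. It identifies $\Hom_A(A^\ltimes,A)$ with the regular module twisted by the automorphism $\theta\otimes\id$, which is isomorphic to the regular module, so your fallback also works.

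\textbf{Step 1 detour.} The detour through $M\otimes_A A^\ltimes\to M$ is correct: the kernel is $M$ twisted by the grading automorphism of $A^\ltimes$. It is not needed, though, since the adjunction alone gives the vanishing.
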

	
	\noindent In the case where $A$ is concentrated in degree zero, a description of $\CM(A^\ltimes)$ using differential modules is given in \cite{Win}. We now extend this construction to the graded case with the following construction.
	
	\begin{definition}
		\label{difmod}
		Let $A$ be a $\Z$-graded algebra.
		We define the category of differential $A$-modules as follows.
		\begin{itemize}
			\item Objects are pairs of the form $(M,d_M)$ where $M$ is an (ungraded) $A$-module and $d_M: M \to M$ is a $k$-linear map such that $d_M^2 = 0$ and
			\[\forall m \in M, \; \forall a \in A, \; d_M(m.a) = (-1)^{|a|} d_M(m).a\]
			\item A morphism $f: (M,d_M) \to (N,d_N)$ is the data of a morphism $f:M \to N$ in $\mod \, A$ such that $f \circ d_M = d_N \circ f$ in $\mod \, k$.
		\end{itemize}
		Set $\Pcat(A)$ to be the full subcategory $\Dif(A)$ consisting of objects $(P,d_P)$ where $P$ is a projective $A$-module.
	\end{definition}
	
	\begin{remark}
		If $M$ is a DG-module over $A$, it can be made into a differential module over $A$ by forgetting the $\Z$-grading. This defines a faithful functor $\textup{DGmod} \, A \longrightarrow \Dif(A)$. However, this functor is not dense as not all differential modules can be endowed with a grading making them a DG-module.
	\end{remark}
	
	\begin{proposition}
		\label{equivDif}
		Consider the functor $\Gamma: \mod \, A^\ltimes \to \Dif(A)$ defined as follows
		\begin{itemize}
			\item If $M$ is an $A^\ltimes$-module then $\Gamma(M) = (M_{|A},d_M)$ where $M_{|A}$ is the restriction of $M$ to $\mod \, A$ and $d_M$ is given by the action of $1\otimes \varepsilon$ on $M$. 
			\item If $f: M \to N$ is a morphism in $\mod \, A^\ltimes$, define $\Gamma(f): (M_{|A},d_M) \to (N_{|A},d_N)$ to be the restriction $f_{|A}$ of $f$ to $\mod \, A$.
		\end{itemize}
		This functor is an equivalence of categories that restricts to an equivalence 
		\[\CM(A^\ltimes) \overset{\sim}{\to} \Pcat(A).\]
	\end{proposition}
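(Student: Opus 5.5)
We will show that $\Gamma$ is well defined, fully faithful and dense, and then read off the restricted equivalence from Proposition~\ref{CMproj}.

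\textbf{Well-definedness.} Let $M$ be an $A^\ltimes$-module. Restricting along the algebra map $A \to A^\ltimes$, $a \mapsto a\otimes 1$, gives $M_{|A}$, and we set $d_M(m) := m.(1\otimes\varepsilon)$. Since $\varepsilon^2 = 0$, we get $d_M^2 = 0$. The twisted Leibniz rule comes from the Koszul sign rule of Definition~\ref{Altimes}:
\[(1\otimes\varepsilon).(a\otimes 1) = (-1)^{|a|}\, a\otimes\varepsilon = (-1)^{|a|}(a\otimes 1).(1\otimes\varepsilon).\]
Applying this to $m$ yields $d_M(m.a) = (-1)^{|a|} d_M(m).a$ for homogeneous $a$; this sign bookkeeping is the only point needing care. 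For a morphism $f$ of $A^\ltimes$-modules, $f$ is $A$-linear and commutes with the action of $1\otimes\varepsilon$, so $f_{|A}$ is a morphism in $\Dif(A)$. Functoriality is clear.

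\textbf{Fully faithful and dense.} $A^\ltimes$ is generated as an algebra by $A\otimes 1$ and $1\otimes\varepsilon$, with $A^\ltimes = (A\otimes 1) \oplus (A\otimes 1)(1\otimes \varepsilon)$. Hence a $k$-linear map is $A^\ltimes$-linear exactly when it is $A$-linear and commutes with $\varepsilon$. This shows $\Gamma$ is fully faithful, and in fact injective on morphisms trivially.

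For density, start from $(N,d_N)$ in $\Dif(A)$ and define
\[n.(a\otimes 1) = n.a, \qquad n.(a\otimes\varepsilon) = (-1)^{|a|}d_N(n).a,\]
extended linearly over homogeneous components of $a$. We must check associativity against the multiplication rule $(a\otimes\varepsilon^\ell).(b\otimes\varepsilon^m) = (-1)^{\ell|b|}ab\otimes\varepsilon^{\ell+m}$. It suffices to verify this on the generators:
\begin{itemize}
\item the relation $\varepsilon^2 = 0$, which holds by $d_N^2 = 0$;
\item the commutation relation $\varepsilon a = (-1)^{|a|}a\varepsilon$, which holds exactly by the Leibniz condition of Definition~\ref{difmod}.
\end{itemize}
Equivalently, $A^\ltimes$ is the quotient of the free product of $A$ and $k[\varepsilon]$ by these two relations, so a module structure is the same as an $A$-module together with an operator satisfying them. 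Finite generation is preserved because $A\otimes 1\subset A^\ltimes$ and $A^\ltimes$ is finite over $A$. Then $\Gamma(N)$ is $(N,d_N)$, so $\Gamma$ is an isomorphism of categories, in particular an equivalence.

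\textbf{Restriction to $\CM(A^\ltimes)$.} By Proposition~\ref{CMproj}, $M$ is maximal Cohen-Macaulay if and only if $M_{|A}$ is projective, that is, if and only if $\Gamma(M)\in\Pcat(A)$. Since both $\CM(A^\ltimes)$ and $\Pcat(A)$ are full subcategories, $\Gamma$ restricts to an equivalence $\CM(A^\ltimes)\overset{\sim}{\to}\Pcat(A)$.

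The main obstacle is only the sign check in the density step: one must confirm that the chosen sign $(-1)^{|a|}$ in $n.(a\otimes\varepsilon)$ is compatible with the Koszul rule. Checking it on generators as above handles this.
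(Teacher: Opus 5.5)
Your proof is correct and follows essentially the same route as the paper: you use the Koszul sign rule to show that $\Gamma(M)$ satisfies the twisted Leibniz rule, you build the inverse by letting $1\otimes\varepsilon$ act through $d_N$, and you get the restriction $\CM(A^\ltimes)\simeq\Pcat(A)$ from Proposition~\ref{CMproj}. Your formula $n.(a\otimes\varepsilon)=(-1)^{|a|}d_N(n).a$ coincides with the paper's $d_N(n.a)$ by the Leibniz rule, and your generators-and-relations presentation of $A^\ltimes$ spells out the associativity check that the paper dismisses as straightforward.
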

	
	\begin{proof}
		We first check that for $M$ in $\mod \, A^\ltimes$, $\Gamma(M)$ is indeed a differential $A$-module. It is sufficient to check that for all $m$ in $M$, $a$ in $A$ we have
		\[(m.1\otimes \varepsilon).a = (-1)^{|a|} (m.a).1 \otimes \varepsilon\] 
		This is true from the Koszul sign rule on $A^\ltimes$. 
		Furthermore, if $f: M \to N$ is a morphism in $\mod \, A^\ltimes$, then $f_{|A} \circ d_M = d_N \circ f_{|A}$ follows from the definition of morphisms in $\mod \, A^\ltimes$. 	
		
		\noindent We now claim that $\Gamma$ is an equivalence of categories, with inverse functor defined as follows.
		\begin{itemize}
			\item A differential module $(M,d_M)$ is sent to the $A^\ltimes$-module with underlying vector space $M$ (seen as a $k$-vector space) and $A^\ltimes$ action given by
			\[\forall m \in M, \; \forall a \in A, \; m.(a \otimes \varepsilon^\ell) = (m.(a \otimes 1)).(1 \otimes \varepsilon^\ell) = d_M^\ell(m.a)\]
			\item If $f:(M,d_M) \to (N,d_N)$ is a morphism in $\Dif(A)$ then the underlying morphism of $A$-modules $f: M \to N$ defines a morphism of $A^\ltimes$ modules thanks to the relation $f \circ d_M = d_N \circ f$.
		\end{itemize}
		It is straightforward to see that this defines a functor that is inverse to $\Gamma$.
		
		The fact that $\Gamma$ restricts to an equivalence between $\CM(A^\ltimes)$ and $\Pcat(A)$ follows from Proposition~\ref{CMproj}.
	\end{proof}
	
	From this equivalence, we can endow $\Dif(A)$ with the structure of an abelian category. With this structure, $\Pcat(A)$ is a Frobenius exact subcategory of $\Dif(A)$.
	
	\begin{definition}
		Let $P$ be a projective $A$-module. Consider the $k$-linear map $\varepsilon_P: P \to P$ defined on homogeneous elements by
		\[\varepsilon_P(m) = (-1)^{|m|}m\]
		Here, the grading on $P$ is the canonical grading used to see objects of $\Acat$ as graded projective $A$-modules.
	\end{definition}
	
	\begin{corollary}
		The projective-injectives objects of $\Pcat(A)$ that are isomorphic to objects of the form $(P\oplus P, \begin{pmatrix}
			0 & 0 \\
			\varepsilon_P & 0
		\end{pmatrix}).$
	\end{corollary}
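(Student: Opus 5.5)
The plan is to transport the question through the equivalence $\Gamma$ of Proposition~\ref{equivDif}. Since $\CM(A^\ltimes)$ is Frobenius and its projective-injective objects are the projective $A^\ltimes$-modules, the projective-injectives of $\Pcat(A)$ are, up to isomorphism, the images $\Gamma(Q)$ with $Q$ projective over $A^\ltimes$. Every projective $A^\ltimes$-module is a direct summand of a free one, and indecomposable projectives are of the form $e_iA^\ltimes = e_iA \otimes k[\varepsilon]/\langle\varepsilon^2\rangle$ for the vertices $i$ of $Q$. So it suffices to show two things. First, for $P = e_iA$ (and hence for every projective $A$-module $P$, by additivity), $\Gamma(P \otimes_A A^\ltimes)$ is isomorphic to the differential module $(P\oplus P, \begin{pmatrix} 0 & 0 \\ \varepsilon_P & 0 \end{pmatrix})$. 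Second, the class of such objects is closed under direct sums and summands, so every projective $A^\ltimes$-module has this form.

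For the first point, take $P = e_iA$ with its canonical grading. The module $P\otimes_A A^\ltimes = P \otimes k[\varepsilon]/\langle \varepsilon^2\rangle$ splits as a right $A$-module into $P\otimes 1 \oplus P \otimes \varepsilon$. Then:
\begin{itemize}
\item The summand $P\otimes 1$ is isomorphic to $P$ via $m\otimes 1\mapsto m$.
\item On $P\otimes \varepsilon$, the Koszul sign rule gives $(m\otimes\varepsilon)\cdot a = (-1)^{|a|}\, ma\otimes \varepsilon$. So the naive map $m\otimes\varepsilon \mapsto m$ is not $A$-linear. Instead use $m\otimes \varepsilon\mapsto (-1)^{|m|} m = \varepsilon_P(m)$. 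Since $|ma| = |m|+|a|$, this intertwines the actions and identifies $P\otimes\varepsilon$ with $P$ as an ungraded $A$-module.
\end{itemize}
Under these identifications, the action of $1\otimes\varepsilon$ sends $m\otimes 1$ to $m\otimes\varepsilon$. It therefore becomes the map $P\to P$, $m\mapsto \varepsilon_P(m)$, from the first summand into the second, and it kills the second summand. This is exactly the matrix $\begin{pmatrix} 0 & 0 \\ \varepsilon_P & 0 \end{pmatrix}$. One should also check directly that this matrix satisfies the twisted Leibniz rule of Definition~\ref{difmod}, which amounts to $\varepsilon_P(ma) = (-1)^{|a|}\varepsilon_P(m)a$, and that its square is zero.

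For the second point, direct sums are clear, since $\varepsilon_{P\oplus P'} = \varepsilon_P\oplus\varepsilon_{P'}$ for the canonical gradings. Closure under summands needs more care. Any projective $A^\ltimes$-module is a finite direct sum of the $e_iA^\ltimes$, by Krull–Schmidt for the finite-dimensional algebra $A^\ltimes$ whose idempotents $e_i\otimes 1$ are those of $A$. It is therefore of the form $P\otimes_A A^\ltimes$ with $P = \bigoplus e_{i}A$ carrying the canonical grading, so summands pose no problem.

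The main obstacle is the grading on $P$. The map $\varepsilon_P$ is only defined with respect to the canonical grading of the graded projective. Hence the statement should be read for $P$ a direct sum of the $e_iA$ with their canonical gradings, which are exactly the objects of $\Acat$. The sign bookkeeping in the identification $P\otimes\varepsilon\cong P$ must be done carefully with this convention.
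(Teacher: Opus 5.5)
Your proposal is correct and follows the paper's own argument: the projective-injectives of $\Pcat(A)$ are the images under $\Gamma$ of the projective $A^\ltimes$-modules, and you then compute $\Gamma(P\otimes k[\varepsilon]/\langle\varepsilon^2\rangle)$. You also usefully fill in two points the paper leaves implicit: the paper writes this image as an equality, whereas you give the explicit $A$-linear isomorphism $m\otimes\varepsilon\mapsto\varepsilon_P(m)$ that absorbs the Koszul sign, and you note that $P$ must carry its canonical grading for $\varepsilon_P$ to be defined (the paper instead calls $P$ ungraded).
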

	
	\begin{proof}
		The projective-injective objects of $\Pcat(A)$ are the images by $\Gamma$ of projective $A^\ltimes$-modules. The projective $A^\ltimes$ modules are of the form $P \otimes k[\varepsilon]/\langle \varepsilon^2 \rangle$ with $P$ an (ungraded) projective $A$-module (i.e. a summand of $A^{\oplus n}$ for some $n>0$, without its grading). In this case we have 
		\[\Gamma(P \otimes k[\varepsilon]/\langle \varepsilon^2 \rangle) = (P\oplus P, \begin{pmatrix}
			0 & 0 \\
			\varepsilon_P & 0
		\end{pmatrix})\]
	\end{proof}
	
	\begin{remark}
		\label{exactstr}
		The exact structure on $\Pcat(A)$ is induced by the one on $\mod \, A$. Indeed, a sequence $0 \to (X,\delta_X) \overset{f}{\to} (Y,\delta_Y) \overset{g}{\to} (Z,\delta_Z) \to 0$ is exact in $\Pcat(A)$ if and only if $0 \to X \overset{f}{\to} Y \overset{g}{\to} Z \to 0$ is exact in $\mod \, A$.
	\end{remark}
	
	\begin{remark}
		\label{susp}
		The suspension in the triangulated category $\underline{\CM(A^\ltimes)}$ is given by the syzygy functor $\Omega$. This enables us to compute the suspension in $\underline{\Pcat(A)}$ in the following way. Let $(P,d_P = \sum_{i \in \Z} d_P^i)$ be an object of $\underline{\Pcat(A)}$ where $d_P^i$ is homogeneous of degree $i$. We have the following short exact sequence.
		
		\[0 \to (P,d_P[1]) \overset{\begin{pmatrix}
				\overline{d_P} \\
				-\id_P
		\end{pmatrix}}{\longrightarrow} (P \oplus P, \begin{pmatrix}
			0 & 0 \\
			\varepsilon_P & 0
		\end{pmatrix}) \overset{\begin{pmatrix}
			\id_P & \overline{d_P}
			\end{pmatrix}}{\longrightarrow} (P, d_P) \to0 \]
		where $d_P[1] = \d\sum_{i \in \Z} (-1)^{1 + |d_P^i|} d_P^i$.
	\end{remark}
	
	\subsection{The category $H^0(\Scat)$ as a category of differential modules}
	We now claim that the stable category $\underline{\Pcat(A)}$ gives a new description of $H^0(\Scat)$. 
	
	\begin{proposition}
		\label{exactequi}
		There is an equivalence of exact categories $Z^0(\Scat) \simeq \Pcat(A)$.
	\end{proposition}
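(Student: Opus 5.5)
We construct an explicit functor $\Phi: Z^0(\Scat) \to \Pcat(A)$ and check that it is fully faithful, essentially surjective, and exact in both directions.

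\textbf{Objects.} Let $X = (\bigoplus_{i\in I} X_i, \delta_X = \sum_{i,j} t^{-\|a_{i,j}\|}\otimes a_{i,j})$ be an object of $\Scat$. Each $X_i$ is an indecomposable graded projective $A$-module with its canonical grading. Set $P_X := \bigoplus_i X_i$, forgetting the grading. Each $a_{i,j}$ is a homogeneous graded morphism $X_i\to X_j$, so it is in particular an $A$-linear map of ungraded modules up to the Koszul sign $f(m.a)=(-1)^{|f||a|}f(m).a$. To turn it into a map compatible with the sign rule of Definition~\ref{difmod}, we twist by the operator $\varepsilon_P$. Concretely, put
\[d_X := \sum_{i,j} c_{i,j}\, \varepsilon_{X_j}^{\,e_{i,j}}\circ a_{i,j},\]
with a sign $c_{i,j}=\pm1$ and an exponent $e_{i,j}\in\{0,1\}$ depending only on $|a_{i,j}|$ (and possibly on degrees of elements). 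These are chosen so that three things hold:
\begin{itemize}
\item $d_X(m.a)=(-1)^{|a|}d_X(m).a$, which forces $e_{i,j}\equiv |a_{i,j}|+1 \pmod 2$;
\item $d_X^2=0$ is equivalent to the Maurer--Cartan equation $\mu^1(\delta_X)+\mu^2(\delta_X,\delta_X)=0$ in $\Z(k[t,t^{-1}]\otimes\Acat)$, where $\mu^1=0$ since $\Acat$ has zero differential;
\item the signs $(-1)^{\sum_{i<j}\alpha_i\|a_j\|}$ coming from the $t$-powers are absorbed by $c_{i,j}$.
\end{itemize}
We compare with the suspension formula of Remark~\ref{suspH0} and with $d_P[1]$ in Remark~\ref{susp} to fix the sign convention consistently. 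This should match them, i.e.\ $\Phi\circ\Sigma\cong \Omega^{-1}\circ\Phi$ or the analogous statement.

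\textbf{Morphisms.} A closed degree $0$ morphism $f=\sum t^{-|b_{i,j}|}\otimes b_{i,j}$ is sent to $\Phi(f)=\sum c'_{i,j}\varepsilon^{e'_{i,j}}\circ b_{i,j}$, with signs and exponents chosen the same way, so that $\Phi(f)$ is $A$-linear. The condition $\mu^1_{\Tw}(f)=0$, which here is $\mu^2(\delta_Y,f)+\mu^2(f,\delta_X)=0$, then translates into $\Phi(f)d_X=d_Y\Phi(f)$. Functoriality is the statement that $\mu^2$ of closed morphisms corresponds to composition.

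\textbf{Full faithfulness and density.} Full faithfulness is a bijection of morphism spaces. The key input is that $\Hom_A(X_i,X_j)$ for ungraded modules equals $\bigoplus_n \Hom^n_{\mathrm{gr}}(X_i,X_j)$, because the $X_i$ are finitely generated graded projectives over the finite dimensional graded algebra $A$. Moreover, twisting by $\varepsilon$ is invertible ($\varepsilon^2=\id$), so degree $0$ morphisms of $\Scat$, namely $\sum_n t^{-n}\otimes b^n$, correspond exactly to all $A$-linear maps. Density follows because every projective $A$-module is a sum of indecomposable projectives $e_vA$, and any differential $d_P$ decomposes into homogeneous components $d^n_P$. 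Untwisting by $\varepsilon$ gives graded morphisms $a_{i,j}$, and $d_P^2=0$ gives Maurer--Cartan.

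There is a gap here: objects of $\Scat$ require $\delta$ strictly lower triangular, whereas a differential module need not admit such a filtration. I would first try to show that any $(P,d_P)$ is isomorphic, via a change of basis, to one with $d_P$ strictly triangular with respect to some ordering of summands, for instance by using that $d_P$ is nilpotent modulo the radical together with the finite global dimension. This step is the main obstacle and may require a Krull--Schmidt argument. It could fail, in which case one works up to the closure in $Z^0$ under isomorphism.

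\textbf{Exactness.} The exact structure on $Z^0(\Scat)$ is that of cone sequences, split on underlying objects. By Remark~\ref{exactstr}, exact sequences in $\Pcat(A)$ are those whose underlying sequence in $\mod A$ is exact, hence split since the modules are projective. So $\Phi$ and its quasi-inverse preserve and reflect conflations, and the equivalence is exact.
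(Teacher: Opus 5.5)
Your outline follows the paper's proof: an explicit sign-twisted functor, Maurer--Cartan $\Rightarrow d_X^2=0$, closedness $\Rightarrow$ chain map, homogeneous decomposition of ungraded Homs for full faithfulness, and split conflations for exactness. The difference is that you never fix $c_{i,j}$ and $e_{i,j}$, and that is where the work lies. The paper takes $d_X=\sum(-1)^{\theta(\|a_{i,j}\|)}\overline{a_{i,j}}$, where $\overline{a}(m)=(-1)^{|m|}a(m)$ and $\theta(x)=x(x+1)/2$. It sends $\sum t^{-|f_{i,j}|}\otimes f_{i,j}$ to $\sum(-1)^{\theta(|f_{i,j}|)}f_{i,j}$, and uses $\theta(x+y)=\theta(x)+\theta(y)+xy$ to absorb the signs coming from the powers of $t$. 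In the paper's setup morphisms of $\Acat$ satisfy $a(mb)=a(m)b$ with no Koszul sign. So the twist goes on every component of $\delta_X$ and on no component of a morphism, not according to your parity rule.

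The serious issue is density, and your worry is justified. The paper's proof writes down a preimage $\bigl(\bigoplus P_i,\sum\pm t^{-\|\overline{a_{i,j}}\|}\otimes\overline{a_{i,j}}\bigr)$ without checking that it is strictly lower triangular, i.e.\ that it is an object of $\Scat$ at all. Your repair cannot succeed, because finite global dimension does not make differentials triangularizable. Write $P_v$ for the indecomposable projective at $v$. Let $Q$ have vertices $1,2$ and arrows $\alpha:1\to2$, $\beta:2\to1$ in degree $0$, and kill only the path $1\to2\to1$. Then $A$ is $5$-dimensional of global dimension $2$: the radical of $P_2$ is isomorphic to $P_1$, and that of $P_1$ is the simple $S_2$. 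Meanwhile the cycle $\gamma:2\to1\to2$ is nonzero with $\gamma^2=0$, so the endomorphism $d$ of $P_2$ given by $\gamma$ makes $(P_2,d)$ an object of $\Pcat(A)$. Suppose $(P_2,d)\cong\Gcat(X)$. Krull--Schmidt forces $X$ to have the single summand $P_2$, and strict triangularity then forces $\delta_X=0$. But $(P_2,0)\not\cong(P_2,d)$ since $d\neq0$. So the proposition is false as stated, and closing under isomorphism changes nothing, because an essential image is already closed under isomorphism.

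What your argument does give is an exact equivalence between $Z^0(\Scat)$ and the full subcategory $\Pcat^{\triangle}(A)$ of differential modules isomorphic to one that is strictly triangular for some ordering of its indecomposable summands. Your triangularization idea shows $\Pcat^{\triangle}(A)=\Pcat(A)$ when $Q$ has no oriented cycles, but not in general. In the acyclic case $\mathrm{End}_A(P_v)=k$, the differential is block triangular for an ordering of the vertices, and the diagonal blocks are, up to the twist, square-zero scalar matrices.

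For Theorem~\ref{Frobequi} and Corollary~\ref{main} the corrected statement suffices. $\Pcat^{\triangle}(A)$ is extension closed and contains the projective-injectives, so $\underline{\Pcat^{\triangle}(A)}\to\underline{\Pcat(A)}$ is a fully faithful triangle functor. It is dense because the objects $(P,0)$ generate $\underline{\Pcat(A)}\simeq\underline{\CM(A^\ltimes)}$ under shifts and cones. To see this:
\begin{itemize}
\item By Buchweitz, $\underline{\CM(A^\ltimes)}$ is a Verdier quotient of $\Dcat^b(\mathrm{mod}\,A^\ltimes)$.
\item Every $A^\ltimes$-module $M$ is an extension of $M/M\varepsilon$ by $M\varepsilon$, both annihilated by $\varepsilon$.
\item Such modules have finite projective resolutions over $A$ since $\mathrm{gl.dim}\,A<\infty$, and these resolutions stay exact over $A^\ltimes$ with $\varepsilon$ acting by $0$.
\end{itemize}
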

	
	\begin{proof}
		We construct an equivalence $\Gcat: Z^0(\Scat) \to \Pcat(A)$ as follows:
		\begin{itemize}
			\item If $X = (\d\bigoplus_{i \in I} X_i,\delta_X = \sum_{i,j \in I} t^{- \|a_{i,j}\|} \otimes a_{i,j})$ is an object of $Z^0(\Scat)$, define $\Gcat(X)$ to be the pair $(\d\bigoplus_{i \in I} X_i,d_X = \sum_{i,j \in I} (-1)^{\s(\|a_{i,j}\|)} \overline{a_{i,j}})$ where $\s(x) = \frac{x(x+1)}{2}$ and $\overline{a_{i,j}}: X_i \to X_j$ is defined by $\overline{a_{i,j}}(m) = (-1)^{|m|} a_{i,j}(m)$. Remark that $\s(x+y) = \s(x) + \s(y) + xy$ and that if $a_{i,j}$ and $a_{j,k}$ are homogeneous then $\overline{a_{j,k}} \circ \overline{a_{i,j}} = (-1)^{|a_{i,j}|} a_{j,k} \circ a_{i,j}$ (this defines an involution).
			\item If $f = \d\sum_{i \in I} \sum_{j \in J} t^{-|f_{i,j}|} \otimes f_{i,j}$ is a morphism between $X$ and $Y$ (using the notations above), set $\Gcat(f) = \d\sum_{i \in I} \sum_{j \in J} (-1)^{\s(|f_{i,j}|)}f_{i,j}$.
		\end{itemize}	
		We now check that $\Gcat$ is a functor.
		
		First we check that if $X$ an object of $Z^0(\Scat)$ with notations above, $\Gcat(X)$ is indeed an object of $\Pcat(A)$. To do so, it is sufficient to check that $d_X^2 = 0$. The generalized Maurer-Cartan equations gives us $\mu^2_{k[t,t^{-1}] \otimes_k \Acat}(\delta_X,\delta_X)= 0$. Thus we have
		\[\sum_{i,j,k \in I} \mu^2_{k[t,t^{-1}] \otimes_k \Acat}(t^{- \|a_{j,k}\|} \otimes a_{j,k},t^{- \|a_{i,j}\|} \otimes a_{i,j}) = 0\]
		i.e.
		\[\sum_{i,j,k \in I} (-1)^{-\|a_{i,j}\|.\|a_{j,k}\| + |a_{i,j}|} t^{-\|a_{i,j}\|-\|a_{j,k}\|} \otimes a_{j,k} \circ a_{i,j} = 0\] 
		By grouping terms in the sum, we have
		
		\[\sum_{\ell \in \Z} t^{-\ell} \otimes \sum_{\substack{i,j,k \in I \\ \|a_{i,j}\| + \|a_{j,k}\| = \ell}} (-1)^{-\|a_{i,j}\|.\|a_{j,k}\| + |a_{i,j}|} a_{j,k} \circ a_{i,j} = 0\] 
		Thus for all integers $l$, we have
		\[\sum_{\substack{i,j,k \in I \\ \|a_{i,j}\| + \|a_{j,k}\| = \ell}} (-1)^{-\|a_{i,j}\|.\|a_{j,k}\| + |a_{i,j}|} a_{j,k} \circ a_{i,j} = 0\]
		
		\noindent However, we have 
		\begin{align*}
			d_X^2 &= \sum_{i,j,k \in I} (-1)^{\s(\|a_{i,j}\|) + \s(\|a_{j,k}\|) + |a_{i,j}|} a_{j,k} \circ a_{i,j} \\
			&= \sum_{\ell \in \Z} (-1)^{\s(\ell)} \sum_{\substack{i,j,k \in I \\ \|a_{i,j}\| + \|a_{j,k}\| = \ell}} (-1)^{-\|a_{i,j}\|.\|a_{j,k}\| + |a_{i,j}|}  a_{j,k} \circ a_{i,j} \\
			&= 0
		\end{align*}
		\noindent Thus $\Gcat(X)$ is indeed an object of $\Pcat(A)$.

		\noindent We now check that $\Gcat$ respects composition. Consider morphisms of $Z^{0}(\Scat)$
		\[t^{-|f|} \otimes f: (Y = \bigoplus_{i \in J} Y_i,\delta_Y = \sum_{i,j \in J} t^{- \|b_{i,j}\|} \otimes b_{i,j}) \to (Z = \bigoplus_{i \in K} Z_i,\delta_Z = \sum_{i,j \in K} t^{- \|c_{i,j}\|} \otimes c_{i,j})\]
		 and
		\[t^{-|g|} \otimes g: (X = \bigoplus_{i \in I} X_i,\delta_X = \sum_{i,j \in I} t^{- \|a_{i,j}\|} \otimes a_{i,j}) \to (Y = \bigoplus_{i \in J} Y_i,\delta_Y = \sum_{i,j \in J} t^{- \|b_{i,j}\|} \otimes b_{i,j})\]
		We have
		
		\begin{align*}
			\Gcat(t^{-|f|} \otimes f \circ t^{-|g|} \otimes g) &= \Gcat(\mu^2_{k[t,t^{-1}]\otimes_k \Acat}(t^{-|f|} \otimes f,t^{-|g|} \otimes g)) \\
			&= \Gcat((-1)^{-|g|.\|f\|}t^{-|f| - |g|} \otimes \mu^2_\Acat(f,g)) \\
			&= (-1)^{-|g|.\|f\| + |g|} \Gcat(t^{-|f| - |g|} \otimes f \circ g) \\
			&= (-1)^{|g|.|f| + \s(-|f|-|g|)} f \circ g \\
			&= (-1)^{|g|.|f|+ \s(-|f|-|g|) - \s(-|f|) - \s(-|g|)} \Gcat(t^{-|f|} \otimes f) \circ \Gcat(t^{-|g|} \otimes g) \\
			&= \Gcat(t^{-|f|} \otimes f) \circ \Gcat(t^{-|g|} \otimes g)
		\end{align*} 
		
		\noindent Consider a morphism $t^{-|f|} \otimes f: (X, \delta_X) \to (Y,\delta_Y)$ where $(X,\delta_X)$ and $(Y,\delta_Y)$ are as above and $f: X_k \to Y_\ell$ is a homogeneous morphism in $\Acat$. Remark also that if $a$ and $b$ are homogeneous morphisms of $\Acat$ then $a \circ \overline{b} = \overline{a \circ b}$ and $\overline{a} \circ b = (-1)^{|b|} \overline{a \circ b}$. We now have
		\begin{align*}
			\Gcat(f) &\circ d_X - d_Y \circ \Gcat(f) = \sum_{\substack{i \in I \\ j \in J}} (-1)^{\s(|f|) + \s(\|a_{i,k}\|)} f \circ \overline{a_{i,k}} - (-1)^{\s(|b_{\ell,j}|) + \s(|f|)} \overline{b_{\ell,j}} \circ f \\
			&= \sum_{\substack{i \in I \\ j \in J}} (-1)^{\s(|f|) + \s(\|a_{i,k}\|)} \overline{f \circ a_{i,k}} - (-1)^{\s(\|b_{\ell,j}\|) + \s(|f|) + |f|} \overline{b_{\ell,j} \circ f} \\
			&= \sum_{r \in \Z} \left( \sum_{\substack{i \in I \\ j \in J \\ \|a_{i,k}\|+|f| = r}} (-1)^{\s(|f|) + \s(\|a_{i,k}\|)} \overline{f \circ a_{i,k}} - \sum_{\substack{i \in I \\ j \in J \\ \|b_{\ell,j}\|+|f| = r}} (-1)^{\s(\|b_{\ell,j}\|) + \s(|f|) + |f|} \overline{b_{\ell,j} \circ f} \right)
		\end{align*}
		However, $\mu^1_{\Scat}(t^{-|f|} \otimes f) = 0$. Therefore
		
		\begin{align*}
			\mu^1_{\Scat}(t^{-|f|} \otimes f) &= \mu^2_{k[t,t^{-1}] \otimes_k \Acat}(t^{-|f|} \otimes f,\delta_X) + \mu^2_{k[t,t^{-1}] \otimes_k \Acat}(\delta_Y,t^{-|f|} \otimes f) \\
			&= \sum_{\substack{i \in I \\ j \in J}} \mu^2_{k[t,t^{-1}] \otimes_k \Acat}(t^{-|f|} \otimes f,t^{- \|a_{i,k}\|} \otimes a_{i,k}) +  \mu^2_{k[t,t^{-1}] \otimes_k \Acat}(t^{- \|b_{\ell,j}\|} \otimes b_{\ell,j},t^{-|f|} \otimes f) \\
			&= \sum_{\substack{i \in I \\ j \in J}} (-1)^{\|a_{i,k}\|.\|f\| + |a_{i,k}|} t^{-|f|-\|a_{i,k}\|} \otimes f \circ a_{i,k} +(-1)^{|f|.\|b_{\ell,j}\| + |f|} t^{-\|b_{\ell,j}\|-|f|} \otimes b_{\ell,j} \circ f \\
			&= \sum_{r \in \Z} t^{-r} \otimes \left(\sum_{\substack{i \in I \\ j \in J \\ \|a_{i,k}\|+|f| = r}} (-1)^{\|a_{i,k}\|.|f| + 1} f \circ a_{i,k} + \sum_{\substack{i \in I \\ j \in J \\ \|b_{\ell,j}\|+|f| = r}} (-1)^{|f|.\|b_{\ell,j}\| + |f|}  b_{\ell,j} \circ f \right) \\
			&= \sum_{r \in \Z} (-1)^{\s(r)} t^{-r} \otimes \left( \sum_{\substack{i \in I \\ j \in J \\ \|a_{i,k}\|+|f| = r}} (-1)^{\s(\|a_{i,k}\|)+\s(|f|) + 1} f \circ a_{i,k} \right) \\
			& + (-1)^{\s(r)} t^{-r} \otimes \left(\sum_{\substack{i \in I \\ j \in J \\ \|b_{\ell,j}\|+|f| = r}} (-1)^{\s(|f|)+\s(\|b_{\ell,j}\|) + |f|}  b_{\ell,j} \circ f \right)  \\
			&= 0
		\end{align*}
		Thus for all integers $r$, we have 
		\[\left(\sum_{\substack{i \in I \\ j \in J \\ \|a_{i,k}\|+|f| = r}} (-1)^{\s(\|a_{i,k}\|)+\s(|f|) + 1} f \circ a_{i,k} + \sum_{\substack{i \in I \\ j \in J \\ \|b_{\ell,j}\|+|f| = r}} (-1)^{\s(|f|)+\s(\|b_{\ell,j}\|) + |f|}  b_{\ell,j} \circ f \right) = 0\]
		This gives us the wanted relation $\Gcat(f) \circ d_X - d_Y \circ \Gcat(f) = 0$. 
		
		We have now shown that $\Gcat$ is indeed a functor. From its definition on objects, we deduce that it is essentially surjective: in fact if $X = \left(\d\bigoplus_{i \in I} P_i, d_X = \sum_{i,j \in I} a_{i,j}\right)$ is an object of $\Pcat(A)$, we can construct an essential preimage $\left(\d\bigoplus_{i \in I} P_i, \delta_X = \sum_{i,j \in I} (-1)^{\s(\|\overline{a_{i,j}}\|)}t^{-\|\overline{a_{i,j}}\|} \otimes \overline{a_{i,j}}\right)$. Similarly, we can construct an inverse map on morphism spaces to show that $\Gcat$ is fully faithful.
		Using Remark \ref{exactstr} it is straightforward to see that $\Gcat$ is exact.
	\end{proof}
	
	\begin{theorem}
		\label{Frobequi}
		There is an equivalence of triangulated categories $H^0(\Scat) \simeq \underline{\Pcat(A)}$.
	\end{theorem}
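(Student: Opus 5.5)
The plan is to pass from the exact equivalence of Proposition~\ref{exactequi} to the homotopy categories. We use $\Gcat: Z^0(\Scat) \to \Pcat(A)$ and identify on both sides the ideal of morphisms that gets killed. On the left, $H^0(\Scat)$ is the quotient of $Z^0(\Scat)$ by the ideal of morphisms in the image of $\mu^1_\Scat$. On the right, $\underline{\Pcat(A)}$ is the quotient of $\Pcat(A)$ by the ideal of morphisms factoring through projective-injective objects, which are the objects $(P\oplus P, \left(\begin{smallmatrix} 0 & 0 \\ \varepsilon_P & 0 \end{smallmatrix}\right))$. So the central claim is: a closed morphism $f: X \to Y$ in $Z^0(\Scat)$ is a $\mu^1_\Scat$-boundary if and only if $\Gcat(f)$ factors through such a projective-injective object.

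To prove the claim, I translate a homotopy $h$ of degree $-1$ in $\Scat$ (a sum of terms $t^{1-|h_{i,j}|}\otimes h_{i,j}$) into an $A$-linear map $H: X \to Y$, up to the $\overline{(\cdot)}$ involution and the signs $(-1)^{\s(\cdot)}$. I then check with the same sign bookkeeping as in Proposition~\ref{exactequi} that $\mu^1_\Scat(h)=f$ becomes
\[\Gcat(f) = d_Y \circ H' + H'\circ d_X,\]
where $H'$ is a twisted version of $H$ (composed with some $\varepsilon$). Conversely, any $A$-linear $H'$ satisfying this relation comes from a homotopy. It then remains to show that in $\Pcat(A)$, null-homotopy in this sense is equivalent to factoring through a projective-injective object. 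If $\Gcat(f)=d_YH'+H'd_X$, then $\Gcat(f)$ factors as $X \to (X\oplus X,\ldots) \to Y$, where the maps are built from $d_X$, $\mathrm{id}$ and $H'$, exactly as in the short exact sequence of Remark~\ref{susp}. Conversely, a map factoring through $\Gamma(P\otimes k[\varepsilon]/\langle\varepsilon^2\rangle)$ can be written in that form: morphisms into and out of the projective-injective object are determined by $A$-linear maps on one summand, which gives the needed $H'$. Alternatively, every map to a projective-injective factors through the canonical inflation $X \to (X\oplus X,\ldots)$ of Remark~\ref{susp}, so it suffices to test factorization through that one. This yields a fully faithful, essentially surjective functor $H^0(\Scat)\to\underline{\Pcat(A)}$.

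For triangulated compatibility, I compare suspensions. Remark~\ref{suspH0} gives $\Sigma X$ with differential twisted by $(-1)^{\|a_{i,j}\|}$. Remark~\ref{susp} gives the suspension in $\underline{\Pcat(A)}$ as $(P, d_P[1])$ with signs $(-1)^{1+|d^i_P|}$, which after applying $\Gcat$ should agree up to a natural isomorphism. For triangles, the standard triangles in $H^0(\Scat)$ come from cones in $\Tw$. Under $\Gcat$ these become the split-in-$\mod A$ exact sequences of $\Pcat(A)$, which by Remark~\ref{exactstr} are the conflations defining Happel's triangles. Alternatively, one can cite that an exact equivalence of Frobenius categories carrying the $\mu^1$-ideal onto the projective ideal induces a triangle equivalence. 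Here I use that $Z^0(\Scat)$ with the exact structure transported by $\Gcat$ is Frobenius, and that its stable category is $H^0(\Scat)$, whose triangles come from cones, i.e.\ from conflations.

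I expect the main obstacle to be the sign bookkeeping in identifying homotopies with factorizations through projective-injectives, because the involution $\overline{(\cdot)}$, $\varepsilon_P$ and the $\s$ signs all interact. I would first do it for homogeneous components and group by powers of $t$, as in Proposition~\ref{exactequi}.
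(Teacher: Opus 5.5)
Your proposal follows the paper's route. The exact equivalence $\Gcat$ of Proposition~\ref{exactequi} between Frobenius categories descends to their stable categories, and $H^0(\Scat)$ is the stable category of $Z^0(\Scat)$. The paper only asserts this last point, and your identification of $\mu^1_\Scat$-boundaries with maps factoring through the projective-injectives $\Gamma(P\otimes k[\varepsilon]/\langle\varepsilon^2\rangle)$ makes it explicit.

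There are two small slips:
\begin{itemize}
\item A homotopy of degree $-1$ has components $t^{-1-|h_{i,j}|}\otimes h_{i,j}$, not $t^{1-|h_{i,j}|}\otimes h_{i,j}$.
\item In the converse direction, $H'$ must be twisted-linear, i.e.\ $H'(ma)=(-1)^{|a|}H'(m)a$, rather than $A$-linear. Only then is $d_YH'+H'd_X$ an $A$-linear map.
\end{itemize}
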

	
	\begin{proof}
		We have an exact equivalence $\Gcat$ (from Proposition~\ref{exactequi}) between two Frobenius exact categories. Therefore it induces a equivalence of triangulated categories between the respective stable categories. As the stable category of $Z^0(\Scat)$ is $H^0(\Scat)$, this induced equivalence is the wanted one.
	\end{proof}
	
	\begin{remark}
		By comparing Remark \ref{suspH0} and Remark \ref{susp}, we see that signs for the suspension in $\underline{\Pcat(A)}$ and $H^0(\Scat)$ do coincide.
	\end{remark}
	
	\begin{corollary}
		\label{main}
		There is an equivalence of categories $(\per(A)/[1])_{tr} \simeq \underline{\Pcat(A)}$
	\end{corollary}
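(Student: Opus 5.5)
The corollary is obtained by composing the chain of equivalences already set up, so the plan is to assemble them in order and check that each link is triangulated and that the endpoints are the right categories.

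First, by the proposition opening Section~\ref{sectionCM}, there is a triangulated equivalence
\[(\per(A_{DG})/[1])_{tr} \simeq H^0(\Tw(\Tw(\Acat)*\Z)).\]
Here $\per(A)$ in the statement is read as $\per(A_{DG})$, the perfect derived category of $A$ viewed as a DG-algebra with zero differential. This is the convention fixed in the introduction, where $\Tcat=\per(A)$ for $A=kQ/I$ considered as a DG-algebra. I will say this explicitly in the proof.

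Second, Corollary~\ref{totalequi} gives a triangulated equivalence $H^0(\Tw(\Tw(\Acat)*\Z)) \simeq H^0(\Scat)$. It is obtained by composing the quasi-equivalences of Theorem~\ref{equione} and Proposition~\ref{equitwo}, followed by the inverse of $H^0(\iota)$ from Proposition~\ref{retract}. Third, Theorem~\ref{Frobequi} gives a triangulated equivalence $H^0(\Scat) \simeq \underline{\Pcat(A)}$.

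Composing the three equivalences yields the claim. Since each link is triangulated, the composite is too. As a bonus, the equivalence $\Gamma$ of Proposition~\ref{equivDif} is exact, because the exact structures on both sides are detected on underlying $A$-modules by Remark~\ref{exactstr}. It therefore induces $\underline{\Pcat(A)} \simeq \barCM(A^\ltimes)$, which recovers the form stated in the introduction.

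The main obstacle is the second link. The quasi-equivalences of Theorem~\ref{equione} and Proposition~\ref{equitwo} must induce exact functors on $H^0$. Also, the retraction $r$ of Proposition~\ref{retract} must be compatible with the triangulated structure that $H^0(\Scat)$ inherits, as described in Remark~\ref{suspH0}.

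For the first point I would argue as follows. An $A_\infty$ quasi-equivalence between triangulated $A_\infty$-categories preserves exact triangles in Seidel's sense, since these are defined via cones, which strict functors preserve. Hence it induces a triangulated equivalence on $H^0$. For the second point, $H^0(\iota)$ is the inclusion of a full subcategory that is closed under $\Sigma$ up to the isomorphisms $\varphi_X$. Its quasi-inverse is therefore automatically triangulated.

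A second, smaller point to check is that the triangulated structure on $H^0(\Scat)$ matches the stable triangulated structure on $\underline{Z^0(\Scat)}$ used in Theorem~\ref{Frobequi}. I would rely on the remark comparing Remark~\ref{suspH0} and Remark~\ref{susp} for the suspensions. For the triangles, I would compare the cone in $\Tw$ with the standard triangle built from the conflation in Remark~\ref{susp}.
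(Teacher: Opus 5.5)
Your proposal is correct and matches the paper's proof. The paper also obtains the corollary by composing three equivalences: the identification $(\per(A_{DG})/[1])_{tr}\simeq H^0(\Tw(\Tw(\Acat)*\Z))$, the equivalence of Corollary~\ref{totalequi} (which, as you rightly note, also needs the inverse of $H^0(\iota)$ from Proposition~\ref{retract}), and Theorem~\ref{Frobequi}. Your extra checks on compatibility with the triangulated structures go beyond what is needed, since the corollary is stated only as an equivalence of categories.
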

	
	\begin{proof}
		This equivalence is obtained by composing the equivalence induced by the quasi-equivalences of Corollary~\ref{totalequi} and the triangulated equivalence of Theorem~\ref{Frobequi}.
	\end{proof}
	
	We now give an explicit description of the functor $\per(A) \to \underline{\Pcat(A)}$. This functor is defined by applying $H^0$ to the following $A_\infty$-functors 
	
	\begin{tikzpicture}
		\node (1) at (0,0) {$\Tw(\Acat)$};
		\node (2) at (3,0) {$\Tw(\Tw(\Acat)* \Z)$};
		\node (3) at (7,0) {$\Tw(\Z \Acat * \Z)$};
		\node (4) at (12,0) {$\Tw(k[t,t^{-1}] \otimes_k \Acat)$};
		\node (5) at (16,0) {$\Scat$};
		
		\draw[-stealth] (1) to (2);
		\draw[-stealth] (2) to node[midway, above] {$\tilde{\nu}$} (3);
		\draw[-stealth] (3) to node[midway, above] {$F$} (4);
		\draw[-stealth] (5) to node[midway, above] {$\iota$} (4);
		\draw[-stealth, dotted, bend left=30] (4) to node[above, midway] {$r$} (5);
	\end{tikzpicture}
	and then applying the equivalence $H^0(\Scat) \simeq \underline{\Pcat(A)}$ of Theorem~\ref{Frobequi}.
	
	\noindent $\bullet $ Let $X=(P,d_P)$ be an object of $\per(A)$ where $P$ is a graded projective $A$-module and $d_P$ is a differential. Decompose $P$ as $P\simeq \d\bigoplus_{i \in I} P_i[p_i]$ where each $P_i$ is an indecomposable summand of $A$. Decompose $d_P$ as $d_P= \d\sum_{i,j \in I} d_{i,j}$ where each $d_{i,j}: P_i[p_i] \to P_j[p_j]$ is a degree one morphism. It follows that $p_i-p_j + \|d_{i,j}\| = 0$.
	
	\noindent Seen as an object of $\Tw(\Acat)$, $X$ is the twisted complex
	\[(\bigoplus_{i \in I} P_i[p_i], \sum_{i,j \in I} s^{p_i}_{p_j} \otimes d_{i,j})\]
	
	\noindent Its image in $\Tw(\Z \Acat * \Z)$ by $\tilde{\nu}$ is the twisted complex
	
	\[(\bigoplus_{i \in I} \{P_i(0)\}[p_i], \sum_{i,j \in I} s^{p_i}_{p_j} \otimes s^0_0 \otimes d_{i,j} \otimes \sigma^0)\]
	
	\noindent In turn, its image in $\Tw(k[t,t^{-1}] \otimes_k \Acat)$ by $F$ is the twisted complex
	
	\[(\bigoplus_{i \in I} P_i[p_i], \sum_{i,j \in I} s^{p_i}_{p_j} \otimes t^0 \otimes d_{i,j})\]
	
	\noindent Applying the retraction $r$ of Proposition~\ref{retract}, its image in $Z^0(\Scat)$ (and in $H^0(\Scat)$) is
	
	\[(\bigoplus_{i \in I} P_i, \sum_{i,j \in I} (-1)^{(p_i+1)\|d_{i,j}\| - p_j(p_i-p_j)} t^0 \otimes d_{i,j} )\]
	
	\noindent We can now compute its image in $\Pcat(A)$ (and in $\underline{\Pcat(A)}$):
	
	\[(\bigoplus_{i \in I} P_i, \sum_{i,j \in I} (-1)^{(p_i+1)\|d_{i,j}\| - p_j(p_i-p_j) + \s(\|d_{i,j}\|)} \overline{d_{i,j}})\]
	
	\noindent In particular, if $A$ is in degree zero, the image of $X$ in $\underline{\Pcat(A)}$ is
	
	\[(\bigoplus_{i \in I} P_i, \sum_{i,j \in I} (-1)^{(p_i+1)} d_{i,j})\]
	
	\noindent $\bullet$ Let $f: (P,d_P) \to (Q,d_Q)$ be a morphism in $\per(A)$ with $(P,d_P)$ as above and $(Q,d_Q) = (\d\sum_{j \in J} Q_j(q_j), \sum_{j,k \in J} \partial_{j,k})$. Decompose $f$ as $f= \d\sum_{\substack{i \in I \\ j\in J}} f_{i,j}$ with $f_{i,j}: P_i(p_i) \to Q_j(q_j)$ a degree zero map (thus $p_i - q_j + |f_{i,j}|=0$). The image of $f$ in $\underline{\Pcat(A)}$ is 
	\[\sum_{\substack{i \in I \\ j\in J}} (-1)^{q_j(q_j-p_i) + \s(|f_{i,j}|)} f_{i,j}\]
	
	\noindent As before, if $A$ is in degree zero (if $A$ is ungraded for example), the image of $f$ becomes
	
	\[\sum_{\substack{i \in I \\ j\in J}} f_{i,j}\]
	
	\begin{remark}
		The functor $\per(A) \to \underline{\Pcat(A)}$ obtained here in the ungraded case coincides with the one described in \cite{Win}.
	\end{remark}
	
	\section{Annex A: Complete proof of Theorem~\ref{equione}}
	
	In this annex, we give the full proof of Theorem~\ref{equione}. It remains to check that $\tilde{\nu}$ defines an $A_\infty$-functor in all generality, i.e. coincides with $\mu^d$ for all $d >0$ and not just $d=2$.
	
	\begin{proof}
	
	Let $d >0$ be an integer and $(s^{p_k + l_k}_{p_{k+1}} \otimes a_k \otimes \sigma^{l_k})_{1 \leq k \leq d}$ be a family of composable morphisms in $\Tw(\Acat) * \Z$ where $s^{p_k + l_k}_{p_{k+1}} \otimes a_k \otimes \sigma^{l_k}: (X_{k-1},\delta_{k-1}) \to (X_k,\delta_k)$. As each $\delta_k$ is an endomorphism of $X_k$ in $\Z \Acat$, we can decompose them as $\delta_k = \displaystyle\sum_{j \in J_k} s^{q_{k,j}}_{r_{k,j}} \otimes a_{k,j}$. Let $l:= \d\sum_{i=1}^d l_i$, we now have
	
	\begin{align*}
		& \tilde{\nu}(\mu_{\Tw(\Acat) * \Z}^d(s^{p_d + l_d}_{p_{d+1}} \otimes a_d \otimes \sigma^{l_d},\dots,s^{p_1 + l_1}_{p_{2}} \otimes a_1 \otimes \sigma^{l_1})) \\
		&= \tilde{\nu}((-1)^{\Delta_1} \mu_{\Tw(\Acat)}^d(s^{p_d + l_d}_{p_{d+1}} \otimes a_d,s^{p_{d-1} + l_d + l_{d-1}}_{p_d + l_d} \otimes a_{d-1},\dots,s^{p_1 + l}_{p_2 + l - l_1} \otimes a_1) \otimes \sigma^{l}) \\
		&= \tilde{\nu}((-1)^{\Delta_1} \sum_{k_0,\dots,k_d} \mu_{\Z \Acat}^{d + k_0 + \dots + k_d}(\delta_d,\dots,\delta_d,s^{p_d + l_d}_{p_{d+1}} \otimes a_d,\delta_{d-1}[l_d],\dots,s^{p_1+l}_{p_2 + l - l_1} \otimes a_1, \delta_1[l],\dots,\delta_1[l])\otimes \sigma^l) \\
		&= \tilde{\nu}((-1)^{\Delta_1} \sum_{k_0,\dots,k_d} \sum_{(j(x))_{1 \leq x \leq k} \in \d\prod_{i=0}^d J_i^{k_i}} (-1)^{\Delta_2(j)} \mu_{\Z \Acat}^{d + k_0 + \dots + k_d}(s^{q_{d,j(\sum_{i=0}^{d} k_i)}}_{r_{d,j(\sum_{i=0}^{d} k_i)}}\otimes a_{d,j(\sum_{i=0}^{d} k_i)},\dots,s^{q_{d,j(1 +\sum_{i=0}^{d-1} k_i)}}_{r_{d,j(1+\sum_{i=0}^{d-1} k_i)}} \\
		&\otimes a_{d,j(1+\sum_{i=0}^{d-1} k_i)},s^{p_d + l_d}_{p_{d+1}} \otimes a_d,\dots,s^{p_1+l}_{p_2 + l - l_1} \otimes a_1, s^{q_{1,j(k_0)}+l}_{r_{1,j(k_0)}+l} \otimes a_{1,j(k_0)},\dots,s^{q_{1,j(1)}+l}_{r_{1,j(1)}+l}\otimes a_{1,j(1)})\otimes \sigma^l) \\
		&= \tilde{\nu}((-1)^{\Delta_1} \sum_{k_0,\dots,k_d} \sum_{(j(x))_{1 \leq x \leq k} \in \d\prod_{i=0}^d J_i^{k_i}} (-1)^{\Delta_2(j) + \Delta_3(j)} s^{q_{1,j(1)}+l}_{r_{d,j(\sum_{i=0}^{d}k_i)}} \otimes \mu_\Acat^{d + k_0 + \dots + k_d}(a_{d,j(\sum_{i=0}^{d} k_i)},\dots, \\
		&a_{d,j(1+\sum_{i=0}^{d-1} k_i)},a_d,\dots, a_1, a_{1,j(k_0)},\dots,a_{1,j(1)})\otimes \sigma^l) \\
		&= (-1)^{\Delta_1} \sum_{k_0,\dots,k_d} \sum_{(j(x))_{1 \leq x \leq k} \in \d\prod_{i=0}^d J_i^{k_i}} (-1)^{\Delta_2(j) + \Delta_3(j)} s^{q_{1,j(1)}}_{r_{d,j(\sum_{i=0}^{d}k_i)}} \otimes s^l_0 \otimes \mu_\Acat^{d + k_0 + \dots + k_d}(a_{d,j(\sum_{i=0}^{d} k_i)},\dots, \\
		&a_{d,j(1+\sum_{i=0}^{d-1} k_i)},a_d,\dots, a_1, a_{1,j(k_0)},\dots,a_{1,j(1)})\otimes \sigma^l 
	\end{align*} 
	where 
	\[k = \sum_{i=0}^{d} k_i\]
	\[\ind(j(x)) = i \;\textup{if}\; j(x) \in J_i\]
	\[\Delta_1 = \sum_{i=1}^{d-1} (\sum_{k=i+1}^{d} l_k) (p_i + l_i - p_{i+1}) \]
	\[\Delta_2(j) = \sum_{x=1}^{k} \left( \sum_{m=\ind(j(x))+1}^{d} l_m\right) (q_{\ind(j(x)),j(x)}-r_{\ind(j(x)),j(x)})\]
	\begin{align*}\Delta_3(j) =& \sum_{x<y} (q_{\ind(j(x)),j(x)} - r_{\ind(j(x)),j(x)}) \|a_{\ind(j(y)),j(y)}\| \\
		&+ \sum_{x=1}^{k} (q_{\ind(j(x)),j(x)} - r_{\ind(j(x)),j(x)}) \left(\sum_{m=\ind(j(x))+1}^{d} \|a_m\| \right) \\
		& + \|a_{\ind(j(x)),j(x)}\|\left(\sum_{m=1}^{\ind(j(x))} (p_m +l_m-p_{m+1})\right) + \sum_{i<j} (p_i+l_i-p_{i+1})\|a_j\| 
	\end{align*}
	
	Conversely, we have
	
	\begin{align*}
		& \mu_{\Tw(\Z \Acat * \Z)}^d(\tilde{\nu}(s^{p_d + l_d}_{p_{d+1}} \otimes a_d \otimes \sigma^{l_d}),\dots,\tilde{\nu}(s^{p_1 + l_1}_{p_2} \otimes a_1 \otimes \sigma^{l_1})) \\
		&= \mu_{\Tw(\Z \Acat * \Z)}^d (s^{p_d}_{p_{d+1}} \otimes s^{l_d}_0 \otimes a_d \otimes \sigma^{l_d},\dots,s^{p_1}_{p_2} \otimes s^{l_1}_0 \otimes a_1 \otimes \sigma^{l_1}) \\
		&= \sum_{k_0,\dots,k_d} \sum_{(j(x))_{1 \leq x \leq k} \in \d\prod_{i=0}^d J_i^{k_i}} \mu_{\Z(\Z \Acat * \Z)}^{d+k_0+\dots+k_d}(s^{q_{d,j(\sum_{i=0}^{d} k_i)}}_{r_{d,j(\sum_{i=0}^{d} k_i)}} \otimes s^0_0 \otimes a_{d,j(\sum_{i=0}^{d}k_i)}\otimes \sigma^0,\dots, \\
		&s^{q_{d,j(1+\sum_{i=0}^{d-1} k_i)}}_{r_{d,j(1+\sum_{i=0}^{d-1} k_i)}} \otimes s^0_0 \otimes a_{d,j(1+\sum_{i=0}^{d-1}k_i)}\otimes \sigma^0,s^{p_d}_{p_{d+1}} \otimes s^{l_d}_0 \otimes a_d \otimes \sigma^{l_d},\dots,s^{p_1}_{p_2} \otimes s^{l_1}_0 \otimes a_1 \otimes \sigma^{l_1}, \\
		&s^{q_{1,j(k_0)}}_{r_{1,j(k_0)}} \otimes s^0_0 \otimes a_{1,j(k_0)} \otimes \sigma^0,\dots,s^{q_{1,j(1)}}_{r_{1,j(1)}} \otimes s^0_0 \otimes a_{1,j(1)} \otimes \sigma^0) \\
		&= \sum_{k_0,\dots,k_d} \sum_{(j(x))_{1 \leq x \leq k} \in \d\prod_{i=0}^d J_i^{k_i}} (-1)^{\Sigma_1(j)} s^{q_{1,j(1)}}_{r_{d,j(\sum_{i=0}^{d} k_i)}} \otimes \mu_{\Z \Acat * \Z}^{d+k_0+\dots+k_d}(s^0_0 \otimes a_{d,j(\sum_{i=0}^{d}k_i)}\otimes \sigma^0,\dots, \\
		&s^0_0 \otimes a_{d,j(1+\sum_{i=0}^{d-1}k_i)}\otimes \sigma^0,s^{l_d}_0 \otimes a_d \otimes \sigma^{l_d},\dots,s^{l_1}_0 \otimes a_1 \otimes \sigma^{l_1},s^0_0 \otimes a_{1,j(k_0)} \otimes \sigma^0,\dots,s^0_0 \otimes a_{1,j(1)} \otimes \sigma^0) \\
		&= \sum_{k_0,\dots,k_d} \sum_{(j(x))_{1 \leq x \leq k} \in \d\prod_{i=0}^d J_i^{k_i}} (-1)^{\Sigma_1(j) + \Sigma_2(j)} s^{q_{1,j(1)}}_{r_{d,j(\sum_{i=0}^{d} k_i)}} \otimes \mu_{\Z \Acat}^{d+k_0+\dots+k_d}(s^0_0 \otimes a_{d,j(\sum_{i=0}^{d}k_i)},\dots, \\
		&s^0_0 \otimes a_{d,j(1+\sum_{i=0}^{d-1}k_i)},s^{l_d}_0 \otimes a_d,\dots,s^{l}_{l-l_1} \otimes a_1,s^l_l \otimes a_{1,j(k_0)},\dots,s^l_l \otimes a_{1,j(1)})\otimes \sigma^l \\
		&= \sum_{k_0,\dots,k_d} \sum_{(j(x))_{1 \leq x \leq k} \in \d\prod_{i=0}^d J_i^{k_i}} (-1)^{\Sigma_1(j) + \Sigma_2(j) + \Sigma_3(j)} s^{q_{1,j(1)}}_{r_{d,j(\sum_{i=0}^{d} k_i)}} \otimes s^l_0 \otimes \mu_{\Acat}^{d+k_0+\dots+k_d}(a_{d,j(\sum_{i=0}^{d}k_i)},\dots, \\
		&a_{d,j(1+\sum_{i=0}^{d-1}k_i)},a_d,\dots,a_1,a_{1,j(k_0)},\dots,a_{1,j(1)})\otimes \sigma^l \\
	\end{align*}
	where
	\begin{align*} \Sigma_1(j) =& \sum_{x<y} (q_{\ind(j(x)),j(x)} - r_{\ind(j(x)),j(x)}) \|a_{\ind(j(y)),j(y)}\| \\
		&+ \sum_{x=1}^{k} (q_{\ind(j(x)),j(x)} - r_{\ind(j(x)),j(x)}) \left(\sum_{m=\ind(j(x))+1}^{d} l_m + \|a_m\|\right) \\
		& + \sum_{x=1}^{k} \left(\sum_{i=1}^{\ind(j(x))}(p_i - p_{i+1})\right) \| a_{\ind(j(x)),j(x)} \| + \sum_{i<j} (p_i-p_{i+1})(l_j + \|a_j\|) \end{align*}
	\[\Sigma_2(j) = \sum_{i=1}^{d-1} l_i \left(\sum_{j=i+1}^{d} l_j\right) \]
	\[\Sigma_3(j) = \sum_{x=1}^{k} \|a_{\ind(j(x)),j(x)}\| \left(\sum_{m=1}^{\ind(j(x))} l_m \right) + \sum_{i=1}^{d} \|a_i\| \left(\sum_{m=1}^{i-1} l_m\right)\]
	
	A straightforward computation shows that for all $(j(x))_{1\leq x \leq k}$ in $\prod_{i=0}^d J_i^k$, $\Delta_1 + \Delta_2(j) + \Delta_3(j) = \Sigma_1(j)+\Sigma_2(j) + \Sigma_3(j)$. Thus $\tilde{\nu}$ is a strict $A_\infty$-functor.
	
	\end{proof}
	
	\bibliographystyle{alpha}
	\bibliography{biblio}
\end{document}